\newenvironment{customthm}[1]
  {\innercustomthm}
  {\endinnercustomthm}
\newenvironment{custompro}[1]
  {\innercustompro}
  {\endinnercustompro}
\newenvironment{customcor}[1]
  {\innercustomcor}
  {\endinnercustomcor}
\newtheorem*{thm*}{Theorem}
\newtheorem{thm}{Theorem}
\newtheorem{lem}[thm]{Lemma}
\newtheorem{pro}[thm]{Proposition}
\newtheorem{cor}[thm]{Corollary}
\newtheorem{conj}[thm]{Conjecture}
\newtheorem{ques}[thm]{Question}
\newcommand{\N}{\mathbb{N}}
\begin{document}

\title{Shameful Inequalities for List and DP Coloring of Graphs}

\author{Hemanshu Kaul\footnote{Department of Applied Mathematics, Illinois Institute of Technology, Chicago, IL 60616. E-mail: {\tt kaul@iit.edu}},
Jeffrey A. Mudrock\footnote{Department of Mathematics and Statistics, University of South Alabama, Mobile, AL 36688. E-mail: {\tt mudrock@southalabama.edu}},
Gunjan Sharma\footnote{Department of Applied Mathematics, Illinois Institute of Technology, Chicago, IL 60616. E-mail: {\tt gsharma7@hawk.iit.edu}}
}



\maketitle

\begin{abstract}

 The chromatic polynomial of a graph is an important notion in algebraic combinatorics that was introduced by Birkhoff in 1912; denoted $P(G,k)$, it equals the number of proper $k$-colorings of graph $G$. Enumerative analogues of the chromatic polynomial of a graph have been introduced for two well-studied generalizations of ordinary coloring, namely, list colorings:  $P_{\ell}$, the list color function (1990); and DP colorings: $P_{DP}$, the DP color function (2019), and $P^*_{DP}$, the dual DP color function (2021). For any graph $G$ and $k \in \N$, $P_{DP}(G, k) \leq P_\ell(G,k) \leq P(G,k) \leq P_{DP}^*(G,k)$.  In 2000, Dong settled a conjecture of Bartels and Welsh from 1995 known as the \emph{Shameful Conjecture} by proving that for any $n$-vertex graph $G$, $P(G,k+1)/(k+1)^n \geq P(G,k)/k^n$ for all $k \in \N$ satisfying $k \geq n-1$.  In contrast, for infinitely many positive integers $n$, Seymour (1997) gave an example of an $n$-vertex graph for which the above inequality does not hold for some $k = \Theta(n/ \log n)$. In this paper, we consider analogues of Dong's result for list and DP color functions.  Specifically, in contrast to the chromatic polynomial, we prove that for any $n$-vertex graph $G$, $P_{\ell}(G,k+1)/(k+1)^n \geq P_{\ell}(G,k)/k^n$ and $P_{DP}(G,k+1)/(k+1)^n \geq P_{DP}(G,k)/k^n$  for all $k \in \N$.  For the dual DP analogue of these inequalities, we show that there is a graph $G$ and $k \in \N$ such that $P_{DP}^*(G,k+1)/(k+1)^n < P_{DP}^*(G,k)/k^n$, and we prove $P_{DP}^*(G,k+1)/(k+1)^n \geq P_{DP}^*(G,k)/k^n$ for all $k \in \N$ satisfying $k \geq n-1$ when $G$ is an $n$-vertex complete bipartite graph.
\medskip

\noindent {\bf Keywords.}  list coloring, DP-coloring, correspondence coloring, chromatic polynomial, list color function, DP color function, Shameful Conjecture

\noindent \textbf{Mathematics Subject Classification.} 05C15, 05C30, 05C69, 05A99. 

\end{abstract}

\section{Introduction}\label{intro}

We begin by establishing some notation and basic definitions.  In this paper all graphs are nonempty, finite, and simple unless otherwise noted.  Generally speaking we follow West~\cite{W01} for terminology and notation. We use $\mathbb{N}$ to denote the set of all positive integers. For $m \in \mathbb{N}$, $[m]$ denotes the set $\{1,\ldots,m\}$. For a set $S$, $2^{S}$ denotes the power set of $S$. For a graph $G$, $V(G)$ and $E(G)$ denote the vertex set and the edge set of $G$ respectively.  When $u$ and $v$ are adjacent in $G$ we use $uv$ or $vu$ to denote the edge with endpoints $u$ and $v$.  By $H \subseteq G$, we mean that $H$ is a subgraph of $G$. If $S \subseteq V(G)$, $G[S]$ denotes the subgraph of $G$ induced by $S$. For any $v \in V(G)$, we let $G-v$ be the subgraph $G[V(G)-\{v\}]$. For any $S \subseteq E(G)$, we let $G-S$ be the subgraph of $G$ with vertex set $V(G)$ and edge set $E(G)-S$. For any $S_{1},S_{2} \subseteq V(G)$, $E_{G}(S_{1},S_{2})$ denotes the set consisting of the edges in $E(G)$ that have one endpoint in $S_{1}$ and the other in $S_{2}$. We use $K_{m,n}$ to denote the complete bipartite graphs with partite sets of size $m$ and $n$. The \emph{join} of vertex disjoint graphs $G$ and $H$ is the graph obtained from their disjoint union by adding the edges in $\{uv : u \in V(G), v \in V(H)\}$.

\subsection{Graph coloring, list coloring and DP-coloring}

In the classic vertex coloring problem, we seek a \emph{proper $k$-coloring} of a graph $G$ which is a function $f: V(G) \rightarrow [k]$ such that $f(v) \neq f(u)$ whenever $uv \in E(G)$. We say that $G$ is \emph{$k$-colorable} if it has a proper $k$-coloring. The \emph{chromatic number} $\chi(G)$ of $G$ is the smallest $k \in \mathbb{N}$ such that there exists a proper $k$-coloring of $G$.  
 
List coloring is a generalization of classic vertex coloring that was introduced in the 1970s independently by Vizing~\cite{V76} and Erd\H{o}s, Rubin, and Taylor~\cite{ET79}. A \emph{list assignment} of $G$ is a function $L$ that assigns a set of colors to each $v \in V(G)$. If $|L(v)| = k$ for each $v \in V(G)$, then $L$ is called a \emph{$k$-assignment} of $G$. The graph $G$ is \emph{$L$-colorable} if there exists a proper coloring $f$ of $G$ such that $f(v) \in L(v)$ for each $v \in V(G)$ (we refer to $f$ as a \emph{proper $L$-coloring} of $G$). The \emph{list chromatic number} of $G$, $\chi_{\ell}(G)$, is the smallest $k$ such that there exists a proper $L$-coloring for every $k$-assignment $L$ of $G$. It immediately follows that for any graph $G$, $\chi(G) \leq \chi_\ell(G)$. This inequality may be strict since it is known that the gap between $\chi(G)$ and $\chi_{\ell}(G)$ can be arbitrarily large; for example, $\chi_{\ell}(K_{m,n}) = m+1$ when $n \ge m^m$, but all bipartite graphs are $2$-colorable.

 DP-coloring is a generalization of list coloring that was introduced by Dvo\v{r}\'{a}k and Postle~\cite{DP15} in 2015. Intuitively, DP-coloring is a generalization of list coloring where each vertex in the graph still gets a list of colors but identification of which colors are different can vary from edge to edge. Formally, a \emph{DP-cover} of a graph $G$ is a pair $\mathcal{H} = (L,H)$, where $H$ is a graph and $L : V(G) \rightarrow 2^{V(H)}$ is a function such that:

\vspace{3mm}

     \noindent(1) the set $\{L(v) : v \in V(G)\}$ forms a partition of $V(H)$ of size $|V(G)|$;\\
     (2) for every $v \in V(G)$, the graph $H[L(v)]$ is a complete graph;\\
     (3) if $E_{H}(L(u),L(v))$ is nonempty, then either $u = v$ or $uv \in E(G)$;\\
     (4) if $uv \in E(G)$, then $E_{H}(L(u),L(v))$ is a matching (the matching may be empty).
     
\vspace{3mm}     
 
In this paper, a cover always refers to a DP-cover. Suppose $\mathcal{H} = (L,H)$ is a cover of $G$. We refer to the edges of $H$ connecting distinct parts of the partition $\{L(v) : v \in V(G) \}$ as \emph{cross-edges}. A cover $\mathcal{H} = (L,H)$ of $G$ is \emph{$k$-fold} if $|L(v)| = k$ for each $v \in V(G)$. An $\mathcal{H}$ -coloring of $G$ is an independent set in $H$ of size $|V(G)|$.  The \emph{DP-chromatic number} of $G$, $\chi_{DP}(G)$, is the smallest $k \in \mathbb{N}$ such that $G$ has an $\mathcal{H}$-coloring for every $k$-fold cover $\mathcal{H}$ of $G$. 

A $k$-fold cover $\mathcal{H} = (L,H)$ of a graph $G$ is called \emph{full} if for each $uv \in E(G)$, $|E_{H}(L(u),L(v))| = k$. Given a $k$-fold cover $\mathcal{H}$ of $G$, unless otherwise mentioned, we will always name the vertices of $H$ so that for each $v \in V(G)$, $L(v) = \{(v,i) : i \in [k]\}$. 

Suppose $\mathcal{H} = (L,H)$ is a $k$-fold cover of $G$.  We say that $\mathcal{H}$ has a \emph{canonical labeling} if it is possible to name the vertices of $H$ so that $L(u) = \{ (u,j) : j \in [k] \}$ and $(u,j)(v,j) \in E(H)$ for each $j \in [k]$ whenever $uv \in E(G)$.\footnote{When $\mathcal{H}=(L,H)$ has a canonical labeling, we will always refer to the vertices of $H$ using this naming scheme.} An easy inductive argument can be used to show that a full cover of a tree has a canonical labeling (see~\cite{KM20}). It is easy to show that if $\mathcal{H}$ is a $k$-fold cover of graph $G$ such that $\mathcal{H}$ has a canonical labeling, then $G$ has a proper $k$-coloring if and only if $G$ admits an $\mathcal{H}$-coloring.
 
Given a cover $\mathcal{H} = (L,H)$ of $G$ and subgraph $G'$ of $G$, the \emph{subcover of $\mathcal{H}$ restricted to $G'$} is $\mathcal{H}'=(L',H')$ where $L'$ is the restriction of $L$ to $V(G')$ and $H'$ is the subgraph of $H$ with $V(H')=\bigcup_{u\in V(G')}L(u)$ and $E(H') = E(H[V(H')]) - \bigcup_{uv \in E(G)-E(G')} E_H(L(u),L(v))$.
 
 Given a $k$-assignment $L$ for a graph $G$, it is easy to construct a $k$-fold cover $\mathcal{H}$ of $G$ such that $G$ has an $\mathcal{H}$-coloring if and only if $G$ has a proper $L$-coloring (see~\cite{BK17}).  It follows that $\chi_\ell(G) \leq \chi_{DP}(G)$.  This inequality may be strict since it is easy to prove that $\chi_{DP}(C_n) = 3$ whenever $n \geq 3$, but the list chromatic number of any even cycle is $2$ (see~\cite{BK17} and~\cite{ET79}).

 \subsection{Counting Colorings} \label{DPCF}
 
For $k \in \N$, the \emph{chromatic polynomial} of a graph $G$, $P(G,k)$, is equal to the number of proper $k$-colorings of $G$. It is known that $P(G,k)$ is a polynomial in $k$ of degree $|V(G)|$ (see~\cite{B12}).

In 1990 (\cite{AS90}), the notion of chromatic polynomial was extended to list coloring as follows. If $L$ is a list assignment for $G$, we use $P(G,L)$ to denote the number of proper $L$-colorings of $G$. The \emph{list color function} $P_\ell(G,k)$ is the minimum value of $P(G,L)$ where the minimum is taken over all possible $k$-assignments $L$ for $G$.  Since a $k$-assignment could assign the same $k$ colors to every vertex in a graph, it is clear that $P_\ell(G,k) \leq P(G,k)$ for each $k \in \N$. It is known that for each $k \in \N$, $P(G,k)=P_{\ell}(G,k)$ when $G$ is a cycle or chordal\footnote{A chordal graph is a graph in which all cycles of length four or more contain a chord.} (see~\cite{KN16} and~\cite{AS90}). However, for some graphs, the list color function can differ significantly from the chromatic polynomial for small values of $k$.  One reason for this is that a graph can have a list chromatic number that is much higher than its chromatic number. On the other hand, Dong and Zhang~\cite{DZ22} (improving upon results in~\cite{D92},~\cite{T09} and~\cite{WQ17}) recently showed that for a connected graph $G$ with $t \geq 2$ edges, $P_{\ell}(G,k)=P(G,k)$ whenever $k \geq t-1$. 
     
The notion of chromatic polynomial has also been extended to DP-coloring~\cite{KM20}.  Suppose $\mathcal{H} = (L,H)$ is a cover of graph $G$.  Let $P_{DP}(G, \mathcal{H})$ be the number of $\mathcal{H}$-colorings of $G$.  Then, the \emph{DP color function of $G$}, denoted $P_{DP}(G,k)$, is the minimum value of $P_{DP}(G, \mathcal{H})$ where the minimum is taken over all possible $k$-fold covers $\mathcal{H}$ of $G$.  It is easy to see that for any graph $G$ and $k \in \N$, $P_{DP}(G, k) \leq P_\ell(G,k) \leq P(G,k).$
  
It is fairly straightforward to prove that for each $n \geq 3$ and $k \in \N$, $P_{DP}(C_n,k) = P(C_n,k) = (k-1)^n - (k-1)$ when $n$ is odd. However for $n \geq 4$ and $k \geq 2$, $P_{DP}(C_n,k) = (k-1)^n - 1 < (k-1)^n + (k-1) = P(C_n,k)$ when $n$ is even (see~\cite{KM20} and~\cite{M21}). So, unlike list color functions, DP color functions need not be equal to the corresponding chromatic polynomial for sufficiently large $k$.  
  
Furthermore, Dong and Yang~\cite{DY21} (extending results of~\cite{KM20}) showed that if $G$ contains an edge $e$ such that the length of a shortest cycle containing $e$ in $G$ is even, then there exists an $N \in \mathbb{N}$ such that $P_{DP}(G,k) < P(G,k)$ whenever $k \geq N$. See~\cite{ZD22} for a further generalization of this. Asymptotically, it was shown in~\cite{MT20} that for every $n$-vertex graph $G$, $P(G, k)-P_{DP}(G, k) = O(k^{n-3})$ as $k \rightarrow \infty$.

Recently, the second named author~\cite{M21} introduced the \emph{dual DP color function} of a graph $G$, denoted $P^{*}_{DP}(G,k)$, which equals the maximum number of $\mathcal{H}$-colorings of $G$ over all full $k$-fold covers $\mathcal{H}$ of $G$. Suppose $\mathcal{H}$ has a canonical labeling and $G$ has a proper $k$-coloring.  Then, if $\mathcal{I}$ is the set of $\mathcal{H}$-colorings of $G$ and $\mathcal{C}$ is the set of proper $k$-colorings of $G$, the function $f: \mathcal{C} \rightarrow \mathcal{I}$ given by $f(c) = \{ (v, c(v)) : v \in V(G) \}$ is a bijection. In fact, 
$$P_{DP}(G,k) \leq P_{\ell}(G,k) \leq P(G,k) \leq P^{*}_{DP}(G,k).$$
It is known that for each $n \geq 3$, if $n$ is even and $k \in \N$, $P_{DP}^{*}(C_{n}, k) = P(C_{n}, k) = (k-1)^n +(k-1)$.  Also if $n$ is odd and $k \geq 2$, $P_{DP}^{*}(C_{n}, k) = (k-1)^n + 1 > P(C_n,k)$ (see~\cite{M21}). So, like DP color functions, dual DP color functions need not be equal to the corresponding chromatic polynomial for sufficiently large $k$.  It is also known that (see~\cite{M21}) for every $n$-vertex graph $G$, $P^{*}_{DP}(G, k) - P(G, k) = O(k^{n-2})$ as $k \rightarrow \infty$. 

\subsection{The Shameful Conjecture and our Motivation} \label{section:SC}

Let $n \geq 2$ and for an $n$-vertex graph $G$, let $\mu(G)$ be the expected number of colors in a uniformly random proper $n$-coloring of $G$. In 1995, to approximate $\mu(G)$, Bartels and Welsh~\cite{BW95} studied a Markov chain on colorings and showed that $\mu(G) = n\left(1 - P(G,n-1)/P(G,n)\right).$ It is easy to see that over all graphs with $n$ vertices, $\mu(G)$ is maximized when $G$ is a complete graph. Even though it seems intuitively obvious that as we keep adding edges to a graph, the expected number of colors needed to color that graph can only increase, it is not true in general. On the other hand, it is seems reasonable that over all $n$-vertex graphs, $\mu(G)$ must be minimized when $G$ is an empty graph on $n$ vertices, $O_{n}$. Bartels and Welsh conjectured that for any $n$-vertex graph $G$, $\mu(G) \geq \mu(O_{n})$. Since $\mu(O_{n}) = n(1 - {((n-1)}/{n})^{n})$, they conjectured,
\begin{equation*}
        n\left(1 - \frac{P(G,n-1)}{P(G,n)}\right) \geq n\left(1 - \left(\frac{n-1}{n}\right)^{n}\right),
    \end{equation*}

\noindent or equivalently,
\begin{equation*}
        \frac{P(G,n)}{n^n} \geq \frac{P(G,n-1)}{(n-1)^n}.
    \end{equation*}

Bartels and Welsh could not prove this at the time even though they thought it was intuitively obvious. Hence they called it the \emph{Shameful Conjecture}. In 1997, Seymour~\cite{S97}, making progress towards this Conjecture, showed that ${P(G,n)}/{P(G,n-1)} \geq {685}/{252}.$ Notice that $(n/(n-1))^n > {685}/{252}$.  The Shameful Conjecture was ultimately proved by Dong~\cite{D00} in 2000. In fact, he proved a stronger result and showed the following. 
  
\begin{thm} [\cite{D00}]   \label{thm: Dong}
            For any $n$-vertex graph $G$, $\frac{P(G,k+1)}{(k+1)^n} \geq \frac{P(G,k)}{k^n}$  for all $k \in \N$ satisfying $k \geq n-1$.
\end{thm}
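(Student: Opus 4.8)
The plan is to pass from the chromatic polynomial to its expansion in the falling-factorial basis and to read the desired inequality off the signs of the resulting coefficients. Write $k^{\underline j} = k(k-1)\cdots(k-j+1)$ and recall the standard identity $P(G,k) = \sum_{j=1}^{n} S(G,j)\, k^{\underline j}$, where $S(G,j)$ is the number of partitions of $V(G)$ into exactly $j$ nonempty independent sets; crucially every $S(G,j)$ is a nonnegative integer, with $S(G,j)=0$ for $j<\chi(G)$ and $S(G,n)=1$. Since $k\ge n-1$, every $k^{\underline j}$ with $j\le n$ is nonnegative. Clearing denominators, the target is equivalent to $\Phi(G):=k^n P(G,k+1)-(k+1)^n P(G,k)\ge 0$, and by linearity $\Phi(G)=\sum_{j=1}^n S(G,j)\,T_j$ with $T_j:=k^n (k+1)^{\underline j}-(k+1)^n k^{\underline j}$. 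So it suffices to prove $\sum_{j} S(G,j)\,T_j\ge 0$.

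Next I would analyze the coefficients $T_j$. Using $(k+1)^{\underline j}=(k+1)\,k^{\underline{j-1}}$ and $k^{\underline j}=(k-j+1)\,k^{\underline{j-1}}$, one factors
\[ T_j = k^{\underline{j-1}}(k+1)\big[k^n-(k+1)^{n-1}(k-j+1)\big]. \]
The bracket is affine and strictly increasing in $j$, so $T_j$ changes sign exactly once: it is negative for small $j$ and nonnegative once $j$ passes the threshold $j^\ast = k+1-k^n/(k+1)^{n-1}$, which lies just below $n$ when $k$ is large. The top coefficient is safe, however: $T_n\ge 0$ for $k\ge n-1$, since this reduces to $k^n\ge (k+1)^{n-1}(k-n+1)$, equivalently $(1+\tfrac1k)^{n-1}\big(1-\tfrac{n-1}{k}\big)\le 1$, which follows from $\ln(1+x)\le x$ and $\ln(1-(n-1)x)\le -(n-1)x$ at $x=\tfrac1k$. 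Thus $\Phi(G)$ is a sum of a positive high-$j$ part and a negative low-$j$ part, and the whole content of the theorem is that the former dominates. Note that because the bracket is affine, $\Phi(G)\ge 0$ is literally equivalent to the statement that the $S(G,j)k^{\underline{j-1}}$-weighted mean of $j$ exceeds $j^\ast$; hence no telescoping or term-by-term comparison can succeed, and genuine combinatorial input is required.

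That input I would supply through the color-addition identity $P(G,k+1)=\sum_{R}P(G-R,k)$, the sum ranging over all independent sets $R\subseteq V(G)$ and $G-R:=G[V(G)\setminus R]$ (a proper $(k+1)$-coloring is a choice of the color class $R$ of the new color together with a proper $k$-coloring of $G-R$). Writing $(k+1)^n=\sum_{R\subseteq V(G)}k^{n-|R|}$ and canceling the $R=\emptyset$ terms, $\Phi(G)\ge 0$ becomes
\[ \sum_{\emptyset\ne R\ \mathrm{indep}} k^n\,P(G-R,k)\ \ge\ \sum_{\emptyset\ne R\subseteq V(G)} k^{n-|R|}\,P(G,k). \]
The trivial bound $P(G-R,k)\ge P(G,k)/k^{|R|}$ (restricting a proper coloring of $G$ to $G-R$ is at most $k^{|R|}$-to-one) makes each independent $R$ exactly pay for its own term on the right, so the surplus one must still extract equals the total cost $\sum_{R\ \mathrm{not\ indep}}k^{n-|R|}P(G,k)$ of the non-independent subsets. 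This is the main obstacle: I would need a sharpened lower bound on $P(G-R,k)$ that credits the edges incident to $R$ deleted along with it, strong enough that the accumulated surplus over independent sets covers the non-independent deficit. This is precisely where $k\ge n-1$ must be used, since for smaller $k$ the non-independent subsets are both too numerous and too lightly penalized by $k^{n-|R|}$ — exactly Seymour's regime where the inequality can fail. I expect the crux to require a careful weighting of independent sets by the degrees of their deleted edges (or, equivalently, a log-concavity-type control of the graphical Stirling numbers $S(G,j)$ placing their weighted mean above $j^\ast$); the algebraic identity $\Phi(G)=\sum_j S(G,j)T_j$ and the combinatorial identity for $P(G,k+1)$ are the two faces of this single difficulty.
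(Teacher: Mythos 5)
First, note that the paper does not prove this statement at all: Theorem~\ref{thm: Dong} is quoted from Dong's paper \cite{D00}, so there is no in-paper argument to compare yours against. The only question is whether your proposal is itself a proof, and it is not: it is a correct and well-organized reduction that stops exactly at the hard part. Your algebra is sound --- the expansion $P(G,k)=\sum_j S(G,j)k^{\underline{j}}$, the factorization $T_j=k^{\underline{j-1}}(k+1)\bigl[k^n-(k+1)^{n-1}(k-j+1)\bigr]$, the single sign change of $T_j$, and the verification that $T_n\ge 0$ for $k\ge n-1$ are all correct. But the theorem is the assertion $\sum_j S(G,j)T_j\ge 0$, i.e.\ that the positive high-$j$ terms dominate the negative low-$j$ terms, and you prove nothing in that direction beyond the positivity of the single top term $S(G,n)T_n=T_n$. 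Your own reformulation makes the gap explicit: the claim is equivalent to the $S(G,j)k^{\underline{j-1}}$-weighted mean of $j$ lying above the threshold $j^\ast=k+1-k^n/(k+1)^{n-1}$, and no bound on the graphical Stirling numbers $S(G,j)$ that would force this is ever established.

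The second half of the proposal has the same status. The identity $P(G,k+1)=\sum_R P(G-R,k)$ over independent sets $R$ and the bound $P(G-R,k)\ge P(G,k)/k^{|R|}$ are both correct, but as you compute, they exactly cancel the independent-$R$ terms on the right-hand side and leave an uncovered deficit of $\sum_{R\ \mathrm{not\ indep}}k^{n-|R|}P(G,k)$. The ``sharpened lower bound on $P(G-R,k)$'' that would cover this deficit is announced but never stated, let alone proved; phrases such as ``I would need'' and ``I expect the crux to require'' signal that the essential combinatorial input --- which is the entire content of Dong's theorem, and is where the hypothesis $k\ge n-1$ must actually be used --- is missing. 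Dong's argument in \cite{D00} supplies precisely such a quantitative inequality relating the coefficients in the falling-factorial expansion; without an analogue of it, your outline does not constitute a proof.
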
 

The lower bound on $k$ in Theorem~\ref{thm: Dong} has been studied in terms of the maximum degree of a graph as well. In 2015, Fadnavis~\cite{F15} showed that ${P(G,k+1)}/{(k+1)^{n}} \geq {P(G,k)}/{k^{n}}$ whenever $k \geq 36\Delta^{3/2}$, where $\Delta$ is the maximum degree of $G$.  

Henceforth, for a graph $G$ and $k \in \N$, we will refer to the inequality ${P(G,k+1)}/{(k+1)^{n}} \geq {P(G,k)}/{k^{n}}$ as \emph{the Shameful $(G,k)$-Inequality}\footnote{Whenever it is clear from the context, we will simply say the Shameful Inequality instead of the Shameful $(G,k)$-Inequality}. Note that the Shameful Inequality may not be true for all $k \geq 2$. This can be shown by the following example given by McDiarmid (see~\cite{F15}). Consider $G = K_{n,n}$ where $n \geq 10$. Then, ${P(G,3)}/{P(G,2)} = {(6 + 6(2^n-2))}/{2} = {3(2^n-1)}/{1} < {3^{2n}}/{2^{2n}}$. Seymour~\cite{S97} showed that for any $q \in \N$ if $G$ is a complete $(2^q)$-partite graph with partite sets of size $100q$, then ${P(G,2^q+1)}/{(2^q+1)^n} < {P(G,2^q)}/{2^{qn}}$ and $|V(G)|/(1000\log(|V(G)|)) \leq 2^q \leq |V(G)|/ \log(|V(G)|)$. Interestingly, these are the only counterexamples for smaller values of $k$ of which we are aware.
   
Now we state a well known probabilistic description of the Shameful $(G,k)$-Inequality. This viewpoint will be useful in understanding our results. Suppose each vertex in $G$ is colored independently and uniformly at random with a color from $[k]$. Since there are a total of $k^n$ possible $k$-colorings of $G$, ${P(G,k)}/{k^n}$ is the probability that a randomly chosen $k$-coloring of $G$ is a proper $k$-coloring of $G$. Let $c_{k}$ denote a uniformly, randomly chosen $k$-coloring of $G$. Then the $(G,k)$-Shameful Inequality can be interpreted as:
\begin{equation*}
        \mathbb{P}[c_{k+1} \text{~is proper}] \geq \mathbb{P}[c_{k} \text{~is proper}].
    \end{equation*}
Intuitively, it may seem this inequality should always be true. But as we have just seen, it does not hold for all graphs and values of $k$. Motivated by this, we are interested in studying the list color function, the DP color function, and the dual DP color function analogues of the Shameful $(G,k)$-Inequality. The following question is the main focus of this paper.

\begin{ques} \label{ques:main}
Given an $n$-vertex graph $G$ and $k \in \N$, do the following inequalities hold?\\ 

(i)  $\frac{P_{\ell}(G,k+1)}{(k+1)^n} \geq \frac{P_{\ell}(G,k)}{k^n}$.\\
        
(ii)  $ \frac{P_{DP}(G,k+1)}{(k+1)^n} \geq \frac{P_{DP}(G,k)}{k^n}$.\\

(iii)  $ \frac{P^{*}_{DP}(G,k+1)}{(k+1)^n} \geq \frac{P^{*}_{DP}(G,k)}{k^n}$.
\end{ques}

We refer to (i), (ii) and (iii) as \emph{the list Shameful $(G,k)$-Inequality}, \emph{the DP Shameful $(G,k)$-Inequality} and \emph{the Dual-DP Shameful $(G,k)$-Inequality} respectively. Moreover, we will omit saying $(G,k)$ if it is clear from the context.  

\subsection{Outline and Discussion of Results}

Recall that for a given graph, for large enough values of $k$, its list color function equals its chromatic polynomial. On the other hand, for certain graphs, the DP color function need not be equal to the corresponding chromatic polynomial, no matter how large $k$ gets. Moreover, we know the Shameful $(G,k)$-Inequality does not hold for all graphs $G$ and $k \in \N$. However, despite the aforementioned difference in behavior between the list color function and the DP color function with respect to the corresponsing chromatic polynomial, we show that the list Shameful $(G,k)$-Inequality and the DP Shameful $(G,k)$-Inequality hold for all graphs $G$ and $k \in \N$. We begin Section~\ref{sec:shameful} by proving the following.   

\begin{thm} \label{thm: listSC}
For any $n$-vertex graph $G$, $\frac{P_{\ell}(G,k+1)}{(k+1)^n} \geq \frac{P_{\ell}(G,k)}{k^n}$ for all $k \in \N$.
\end{thm}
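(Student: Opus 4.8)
The plan is to prove the inequality via a double-counting (averaging) argument that sidesteps entirely the probabilistic obstructions that make the ordinary Shameful Inequality fail for small $k$. After clearing denominators, the desired inequality reads $k^n P_{\ell}(G,k+1) \geq (k+1)^n P_{\ell}(G,k)$, so it suffices to exhibit a single $(k+1)$-assignment whose proper colorings can be redistributed among many $k$-assignments.

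First I would fix a $(k+1)$-assignment $L'$ that is optimal in the sense that $P(G,L') = P_{\ell}(G,k+1)$; such an $L'$ exists because $P_{\ell}$ is defined as a minimum over finitely many relevant assignments. Next, I would consider the family $\mathcal{F}$ of all $k$-assignments obtained from $L'$ by deleting exactly one color from each list $L'(v)$. Since $|L'(v)| = k+1$ for every $v$ and the deletions at distinct vertices are made independently, $|\mathcal{F}| = (k+1)^n$, and every $L \in \mathcal{F}$ satisfies $L(v) \subseteq L'(v)$ for all $v$. The crucial observation is that, because $L(v) \subseteq L'(v)$ pointwise, a coloring is a proper $L$-coloring precisely when it is a proper $L'$-coloring that additionally avoids, at each vertex, the single color deleted to form $L$.

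The key step is then to evaluate $\sum_{L \in \mathcal{F}} P(G,L)$ in two ways. On one hand, each $L \in \mathcal{F}$ satisfies $P(G,L) \geq P_{\ell}(G,k)$ by minimality of $P_\ell$, so this sum is at least $(k+1)^n P_{\ell}(G,k)$. On the other hand, I would switch the order of summation and count, for each fixed proper $L'$-coloring $f$, the number of $L \in \mathcal{F}$ for which $f$ remains proper: this requires only that at each vertex $v$ we did not delete the color $f(v)$, leaving $k$ admissible choices of deleted color per vertex, hence exactly $k^n$ assignments $L$. Therefore $\sum_{L \in \mathcal{F}} P(G,L) = k^n P(G,L') = k^n P_{\ell}(G,k+1)$. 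Combining the two evaluations gives $k^n P_{\ell}(G,k+1) \geq (k+1)^n P_{\ell}(G,k)$, which rearranges to the claimed inequality.

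I expect the only delicate point to be the bookkeeping in the double count, namely verifying that for each $L \in \mathcal{F}$ the proper $L$-colorings are exactly the proper $L'$-colorings surviving the deletion, so that no coloring is over- or under-counted and the identity $\sum_{L \in \mathcal{F}} P(G,L) = k^n P_{\ell}(G,k+1)$ holds on the nose. There is no analytic estimate or case analysis involved, and this is precisely why the argument succeeds for \emph{all} $k \in \N$ rather than only for $k \geq n-1$: the minimization built into the definition of $P_{\ell}$ lets us average $P(G,L)$ over all single-color deletions, and such an average is automatically monotone in the list size.
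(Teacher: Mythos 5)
Your proposal is correct and is essentially the paper's own argument: the paper deletes one color from each list of an optimal $(k+1)$-assignment uniformly at random and computes $\mathbb{E}[P(G,Y)] = P_{\ell}(G,k+1)\,k^n/(k+1)^n$ by linearity of expectation, which is exactly your double count $\sum_{L\in\mathcal{F}}P(G,L)=k^nP(G,L')$ divided by $(k+1)^n$. The deterministic averaging versus probabilistic phrasing is only a cosmetic difference, and your key observation that every proper $L$-coloring for $L\in\mathcal{F}$ is a proper $L'$-coloring matches the step the paper uses to identify $X$ with $P(G,Y)$.
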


A graph $G$ is called \emph{enumeratively chromatic-choosable} if $P_{\ell}(G, k) = P(G, k)$ for all $k \geq \chi(G)$ (see~\cite{AM25,KMUG23}). Chordal graphs~\cite{AS90} and cycles~\cite{KN16} are known examples of enumeratively chromatic-choosable graphs. It is also known that if $G$ is enumeratively chromatic-choosable, then $G \vee K_{n}$ is enumeratively chromatic-choosable for every $n \in \N$ (see~\cite{KM21}). See more examples and discussion in~\cite{AM25}. Theorem~\ref{thm: listSC} immediately implies that enumeratively chromatic-choosable graphs satisfy the Shameful $(G,k)$-inequality for all $k \in \N$.

\begin{cor} \label{cor: enumcc}
    If $G$ is enumeratively chromatic-choosable, then $\frac{P(G,k+1)}{(k+1)^n} \geq \frac{P(G,k)}{k^n}$ for all $k \in \N$.
\end{cor}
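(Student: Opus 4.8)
The plan is to deduce this directly from Theorem~\ref{thm: listSC} by splitting into two cases according to whether $k$ is at least the chromatic number. First I would observe that the hypothesis gives us $P_{\ell}(G,j) = P(G,j)$ for every $j \geq \chi(G)$, so the strategy is to transfer the list Shameful Inequality to the chromatic polynomial precisely in the regime where the two functions coincide, and to handle the remaining small values of $k$ by a triviality.

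For the first case, suppose $k \geq \chi(G)$. Then $k+1 > k \geq \chi(G)$ as well, so enumerative chromatic-choosability yields both $P(G,k) = P_{\ell}(G,k)$ and $P(G,k+1) = P_{\ell}(G,k+1)$. Substituting these equalities into the conclusion of Theorem~\ref{thm: listSC} gives
\begin{equation*}
\frac{P(G,k+1)}{(k+1)^n} = \frac{P_{\ell}(G,k+1)}{(k+1)^n} \geq \frac{P_{\ell}(G,k)}{k^n} = \frac{P(G,k)}{k^n},
\end{equation*}
which is exactly the Shameful $(G,k)$-Inequality.

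For the second case, suppose $1 \leq k \leq \chi(G)-1$. Then $G$ has no proper $k$-coloring, so $P(G,k) = 0$, whence $P(G,k)/k^n = 0$; since $P(G,k+1) \geq 0$ always, the inequality $P(G,k+1)/(k+1)^n \geq 0 = P(G,k)/k^n$ holds trivially. There is essentially no obstacle here: the only point requiring a moment's care is verifying that $k \geq \chi(G)$ forces $k+1 \geq \chi(G)$ too, so that the enumerative chromatic-choosability hypothesis can be applied to \emph{both} values simultaneously in the first case, and this is immediate. Combining the two cases covers all $k \in \N$ and completes the proof.
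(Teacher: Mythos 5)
Your proof is correct and follows essentially the same route the paper intends (the paper simply declares the corollary an immediate consequence of Theorem~\ref{thm: listSC}): apply the equality $P_{\ell}(G,j)=P(G,j)$ for $j\geq\chi(G)$ to transfer the list Shameful Inequality, and note the inequality is trivial when $k<\chi(G)$ since $P(G,k)=0$. Your explicit two-case split is a clean and complete write-up of that argument.
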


As a straightforward application of Theorem~\ref{thm: listSC}, if the Shameful $(G,k)$-Inequality does not hold then $P_{\ell}(G,k) < P(G,k)$. 

\begin{cor}  \label{cor: cool}
 Given an $n$-vertex graph graph $G$ and $k \in \N$, let $c_{k}$ denote a uniformly chosen random $k$-coloring of $G$. If $\mathbb{P}[c_{k+1} \text{~is proper}] < \mathbb{P}[c_{k} \text{~is proper}]$, then $P_{\ell}(G,k) < P(G,k)$.
\end{cor}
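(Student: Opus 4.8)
The plan is to derive Corollary~\ref{cor: cool} directly from Theorem~\ref{thm: listSC} by a short contrapositive argument, using the probabilistic interpretation already established in the excerpt. Recall that for a uniformly random $k$-coloring $c_k$ of the $n$-vertex graph $G$, we have $\mathbb{P}[c_k \text{ is proper}] = P(G,k)/k^n$, since there are exactly $k^n$ equally likely colorings and $P(G,k)$ of them are proper. Thus the hypothesis of the corollary, namely $\mathbb{P}[c_{k+1} \text{ is proper}] < \mathbb{P}[c_k \text{ is proper}]$, is precisely the statement that the Shameful $(G,k)$-Inequality fails:
\begin{equation*}
    \frac{P(G,k+1)}{(k+1)^n} < \frac{P(G,k)}{k^n}.
\end{equation*}

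First I would argue by contradiction: suppose instead that $P_\ell(G,k) = P(G,k)$. (Note that $P_\ell(G,k) \le P(G,k)$ always holds, so assuming the conclusion fails means assuming equality.) Under this assumption I would combine three facts. By Theorem~\ref{thm: listSC}, the list Shameful Inequality holds for every $k \in \N$, so
\begin{equation*}
    \frac{P_\ell(G,k+1)}{(k+1)^n} \geq \frac{P_\ell(G,k)}{k^n}.
\end{equation*}
By the general inequality $P_\ell(G,k+1) \le P(G,k+1)$, and by the assumed equality $P_\ell(G,k) = P(G,k)$, these can be chained together.

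Carrying out the chain, I would write
\begin{equation*}
    \frac{P(G,k+1)}{(k+1)^n} \geq \frac{P_\ell(G,k+1)}{(k+1)^n} \geq \frac{P_\ell(G,k)}{k^n} = \frac{P(G,k)}{k^n},
\end{equation*}
where the first inequality is $P_\ell(G,k+1) \le P(G,k+1)$, the second is Theorem~\ref{thm: listSC}, and the final equality is the contradiction hypothesis $P_\ell(G,k) = P(G,k)$. This yields $P(G,k+1)/(k+1)^n \ge P(G,k)/k^n$, contradicting the failure of the Shameful Inequality recorded above. Hence the assumption $P_\ell(G,k) = P(G,k)$ is untenable, and since $P_\ell(G,k) \le P(G,k)$ in general, we conclude $P_\ell(G,k) < P(G,k)$, as desired.

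There is essentially no hard part here: the corollary is a clean logical consequence of Theorem~\ref{thm: listSC} together with the standard inequality $P_\ell \le P$ and the elementary probabilistic reading of $P(G,k)/k^n$. The only point requiring a moment's care is making explicit that the strict inequality in the conclusion follows because the general bound $P_\ell(G,k) \le P(G,k)$ upgrades the ruled-out equality case to a strict separation. I expect the entire proof to occupy only a few lines once the above chain is displayed.
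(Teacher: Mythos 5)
Your proposal is correct and is essentially the paper's proof: both argue by contradiction from the assumption $P_{\ell}(G,k)=P(G,k)$, chaining $P_{\ell}\leq P$, Theorem~\ref{thm: listSC}, and the probabilistic reading of $P(G,k)/k^n$; the only cosmetic difference is that the paper arranges the chain to contradict Theorem~\ref{thm: listSC} while you arrange it to contradict the hypothesis.
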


Recall that when $G = K_{n,n}$ and $n \geq 10$, ${P(G,3)}/{3^{2n}} < {P(G,2)}/{2^{2n}}$.  So, as an illustration of Corollary~\ref{cor: cool} we can conclude that $P_{\ell}(K_{n,n},2) < P(K_{n,n},2)$ for all $n \geq 10$ which can also be proven directly.  In Theorem~\ref{thm: DPSC}, we prove that the DP Shameful $(G,k)$-Inequality holds for all graphs $G$ and $k \in \N$.

\begin{thm} \label{thm: DPSC}
For any $n$-vertex graph $G$, $\frac{P_{DP}(G,k+1)}{(k+1)^n} \geq \frac{P_{DP}(G,k)}{k^n}$ for all $k \in \N$.
\end{thm}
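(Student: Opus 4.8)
The plan is to establish the equivalent rearranged inequality $k^n P_{DP}(G,k+1) \ge (k+1)^n P_{DP}(G,k)$ by a double-counting (averaging) argument, paralleling the natural proof of the list version in Theorem~\ref{thm: listSC}. Fix $k \in \N$ and let $\mathcal{H} = (L,H)$ be a $(k+1)$-fold cover of $G$ attaining the minimum, so that $P_{DP}(G,\mathcal{H}) = P_{DP}(G,k+1)$; as usual write $L(v) = \{(v,i): i \in [k+1]\}$ for each $v \in V(G)$. The idea is to manufacture a large family of $k$-fold covers of $G$ out of $\mathcal{H}$ by deleting one vertex from each part, and then to average the number of colorings over this family.

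For a function $\phi : V(G) \to [k+1]$, let $\mathcal{H}_\phi$ be the subcover of $\mathcal{H}$ obtained by deleting the vertex $(v,\phi(v))$ from each part $L(v)$; that is, the cover whose underlying graph is $H$ restricted to $\bigcup_{v \in V(G)} (L(v) \setminus \{(v,\phi(v))\})$ with the inherited cross-edges. First I would check that each $\mathcal{H}_\phi$ is a genuine $k$-fold cover of $G$: every part now has size $k$, the induced subgraph on each part stays complete as an induced subgraph of a complete graph, cross-edges still join only parts of adjacent vertices, and a subset of a matching is a matching, so the cover axioms (1)--(4) all survive deletion. Hence $P_{DP}(G,\mathcal{H}_\phi) \ge P_{DP}(G,k)$ for each of the $(k+1)^n$ functions $\phi$.

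Next I would double count the quantity $\sum_{\phi} P_{DP}(G,\mathcal{H}_\phi)$ by interchanging the sum over $\phi$ with the sum over $\mathcal{H}$-colorings. An $\mathcal{H}$-coloring is an independent transversal $\sigma$ choosing one vertex $(v,\sigma(v))$ from each part; since $H_\phi \subseteq H$, such a $\sigma$ remains an $\mathcal{H}_\phi$-coloring exactly when none of its chosen vertices was deleted, i.e. when $\phi(v) \ne \sigma(v)$ for every $v \in V(G)$, and there are precisely $k^n$ such $\phi$. Therefore
\[
\sum_{\phi : V(G) \to [k+1]} P_{DP}(G,\mathcal{H}_\phi) = k^n \, P_{DP}(G,\mathcal{H}) = k^n \, P_{DP}(G,k+1).
\]
Bounding each of the $(k+1)^n$ summands on the left below by $P_{DP}(G,k)$ yields $k^n P_{DP}(G,k+1) \ge (k+1)^n P_{DP}(G,k)$, which is the desired inequality after dividing by $k^n(k+1)^n$.

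I expect the only real content to be bookkeeping: confirming that vertex deletion sends covers to covers (axiom (4) on matchings being the point to watch) and verifying the survival count of $k^n$, both of which are routine. There is no obstacle here analogous to the chromatic-polynomial setting of Dong's Theorem~\ref{thm: Dong}, where the inequality genuinely fails for small $k$; the minimization built into the definition of $P_{DP}$ is exactly what lets the averaging close for every $k \in \N$, and the identical scheme recovers Theorem~\ref{thm: listSC} with list assignments in place of covers and proper $L$-colorings in place of $\mathcal{H}$-colorings.
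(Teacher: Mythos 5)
Your proof is correct and is essentially the same as the paper's: the paper deletes one element of each $L(v)$ uniformly at random and computes the expected number of surviving $\mathcal{H}$-colorings by linearity of expectation, which is exactly your double count $\sum_{\phi} P_{DP}(G,\mathcal{H}_\phi) = k^n P_{DP}(G,k+1)$ divided by $(k+1)^n$. The two arguments differ only in packaging (deterministic averaging versus expectation), so there is nothing further to compare.
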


In Section~\ref{sec: Dual} we consider the dual DP color function. Since the dual DP color function can be explicitly calculated for cycles (see~\cite{M21}) and $K_{4}$ (see~\cite{M24}), it is easy to show that the Dual-DP Shameful $(G,k)$-Inequality holds for all values of $k$ when $G$ is a cycle or a $K_{4}$. However, the Dual-DP Shameful $(G,k)$-inequality does not hold for all graphs $G$ and $k \in \N$.

\begin{pro} \label{pro:sharpk5k6}
   If $G$ is $K_{n}$ where $n \geq 5$ or $K_{n,n}$ where $n \geq 10$, then the Dual-DP Shameful $(G,2)$-Inequality does not hold.  
\end{pro}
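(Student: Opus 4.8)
The plan is to reduce everything to an upper bound on $P^{*}_{DP}(G,3)$, since the $k=2$ value is easy to pin down. First I would record that for any connected graph $G$ on $N$ vertices, $P^{*}_{DP}(G,2)=2$: in a full $2$-fold cover every edge $uv$ contributes a perfect matching between $L(u)$ and $L(v)$, which is either the ``straight'' matching or the ``swap'', so (identifying $[2]$ with $\mathrm{GF}(2)$) an $\mathcal{H}$-coloring is exactly a solution $\phi\colon V(G)\to[2]$ of a system $\phi(u)+\phi(v)=c_{uv}$ over $\mathrm{GF}(2)$; for connected $G$ this system, when consistent, has solution set a coset of the one-dimensional space of constants, hence at most (and, taking all swaps, exactly) $2$ solutions. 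As $K_n$ and $K_{n,n}$ are connected, $P^{*}_{DP}(K_n,2)=P^{*}_{DP}(K_{n,n},2)=2$. Thus the Dual-DP Shameful $(G,2)$-Inequality $P^{*}_{DP}(G,3)/3^{N}\ge P^{*}_{DP}(G,2)/2^{N}$ fails precisely when $P^{*}_{DP}(G,3)<2\,(3/2)^{N}$, and the proposition reduces to sufficiently tight upper bounds on $P^{*}_{DP}(G,3)$.

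For $G=K_n$ I would prove $P^{*}_{DP}(K_n,3)=3n$. The lower bound uses an explicit full $3$-fold cover: identifying colors with $[3]$ and orienting $K_n$ as a transitive tournament, assign to each edge the fixed-point-free permutation forcing $\phi(v)\neq\phi(u)+1\pmod 3$ whenever $u<v$; a short check shows the $\mathcal{H}$-colorings are exactly the colorings that use at most two of the three colors and are monotone along the tournament order, and there are exactly $3n$ of these. For the matching upper bound, note that the subcover on a spanning star is a full cover of a tree, hence (as recalled in the excerpt) has a canonical labeling; renaming vertices within the parts $L(v)$ — which changes neither fullness nor the number of $\mathcal{H}$-colorings — we may take all star cross-edges straight, so that every $\mathcal{H}$-coloring satisfies $\phi(v)\neq\phi(v_0)$. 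Fixing $\phi(v_0)=r$ forces the remaining $n-1$ vertices to be colored from the two-element set $[3]\setminus\{r\}$, and for any two such vertices $u,w$ the permutation on $uw$ forbids at least one of their four joint colorings, since two two-element subsets of $[3]$ always intersect. Hence the surviving colorings, viewed as $0/1$-patterns on $n-1$ coordinates, shatter no pair, i.e.\ have VC-dimension at most $1$, so by the Sauer--Shelah lemma there are at most $\binom{n-1}{0}+\binom{n-1}{1}=n$ of them; summing over the three values of $r$ gives $P^{*}_{DP}(K_n,3)\le 3n$. Finally $3n<2\,(3/2)^n$ holds exactly for $n\ge 5$.

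For $G=K_{n,n}$ the target value is $P^{*}_{DP}(K_{n,n},3)=P(K_{n,n},3)=6\cdot 2^{n}-6$. The lower bound is immediate from $P(G,k)\le P^{*}_{DP}(G,k)$ together with the identity $P(K_{n,n},3)=\sum_{\phi_A}(3-|\ran(\phi_A)|)^{n}=6\cdot 2^{n}-6$, realized by the canonical cover. The content is the reverse inequality $P^{*}_{DP}(K_{n,n},3)\le P(K_{n,n},3)$, i.e.\ that no full $3$-fold cover beats the canonical one. I would again rename within parts to make straight the stars at one vertex $a_0\in A$ and one vertex $b_0\in B$; then every $\mathcal{H}$-coloring has $\phi(b)\neq\phi(a_0)$ and $\phi(a)\neq\phi(b_0)$ for all $a,b$, so fixing $\phi(a_0)=r_0$ and $\phi(b_0)=s_0$ (with $s_0\neq r_0$) confines $A$ to $\{r_0,t_0\}$ and $B$ to $\{s_0,t_0\}$, where $t_0$ is the third color. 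Each of the six resulting ``corner'' subproblems becomes a $0/1$ constraint problem on $K_{n-1,n-1}$ in which every $A$--$B$ edge forbids at least one of the four joint patterns, and the goal is to show each such subproblem has at most $2^{n}-1$ solutions (the union of the two ``all-ones'' subcubes, attained by the canonical cover), so that the six corners sum to $6\cdot 2^{n}-6$. Then $6\cdot 2^{n}-6<2\,(3/2)^{2n}=2\,(9/4)^n$ holds exactly for $n\ge 10$.

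The main obstacle is the bipartite estimate in the last paragraph: that a two-coloring constraint problem on $K_{m,m}$ in which each edge kills at least one joint pattern has at most $2^{m+1}-1$ solutions. Unlike the complete-graph case, the ``misses a pattern'' condition constrains only the mixed pairs $(a,b)$ with $a\in A$, $b\in B$, while there are no constraints inside $A$ or inside $B$, so the clean Sauer--Shelah argument does not apply verbatim; I expect to need a more careful analysis of the projections of the solution set onto the $A$- and $B$-coordinates, tracking how forbidding a single mixed pattern forces one of the two parts to be nearly constant. Everything else — the $k=2$ computation, the complete-graph bound via Sauer--Shelah, and the two arithmetic comparisons producing the thresholds $n\ge 5$ and $n\ge 10$ — is routine once this estimate is established.
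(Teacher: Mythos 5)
Your argument splits into two halves, and they have very different statuses. The $K_n$ half is complete and correct, and it takes a genuinely different route from the paper: you bound $P^{*}_{DP}(K_n,3)\le 3n$ by making a spanning star canonical, observing that after fixing the color of the center the remaining $n-1$ vertices live in a two-element alphabet on which no edge's constraint can avoid killing one of the four joint patterns, and then invoking Sauer--Shelah with VC-dimension $1$. The paper instead proves $P^{*}_{DP}(G,3)\le P^{*}_{DP}(G-w,3)+3$ for a universal vertex $w$ (Lemma~\ref{pro:dualthreeupper}) by partitioning the $\mathcal{H}'$-colorings of $G-w$ according to how many second coordinates they use, and iterates. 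Both give $3n$; your Sauer--Shelah argument is arguably slicker, and your exact computation $P^{*}_{DP}(G,2)=2$ for connected $G$ via the $\mathrm{GF}(2)$ linear system is a clean strengthening of the paper's Proposition~\ref{pro: duallower} (which only needs, and only proves, $P^{*}_{DP}(G,2)\ge 2$). The arithmetic thresholds $n\ge 5$ and $n\ge 10$ check out.

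The $K_{n,n}$ half, however, has a genuine gap, and you flag it yourself: everything reduces to the claim that a two-coloring constraint problem on $K_{m,m}$ in which every cross edge forbids at least one of the four joint patterns has at most $2^{m+1}-1$ solutions, and you do not prove this. This claim is not a routine loose end --- it is essentially the entire content of the bipartite case (it is, in effect, the $k=3$ instance of $P^{*}_{DP}(K_{m,n},k)\le P(K_{m,n},k)$ after your corner reduction), and the Sauer--Shelah argument from the complete-graph case does not transfer because only mixed pairs are constrained. The paper avoids this difficulty entirely by proving Theorem~\ref{thm:dual}, that $P^{*}_{DP}(K_{m,n},k)=P(K_{m,n},k)$ for all $m,n,k$: for an arbitrary full cover it conditions on the $k^n$ independent sets in the $Y$-side, notes via Proposition~\ref{pro:canonical} that the resulting multiset of extension counts for each $x_i$ is the same as in the canonical cover up to a permutation, and then applies the Rearrangement Inequality to show the canonical (aligned) cover maximizes $\sum_j\prod_i\beta_{i\sigma_i(j)}$. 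With Theorem~\ref{thm:dual} in hand, the $K_{n,n}$ case of the proposition follows immediately from McDiarmid's observation that the ordinary Shameful $(K_{n,n},2)$-Inequality fails for $n\ge 10$. To complete your proof you would either need to establish your $2^{m+1}-1$ estimate directly (perhaps by adapting the rearrangement idea to the two-letter corner subproblems) or simply route through Theorem~\ref{thm:dual} as the paper does.
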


Note that finding an explicit formula for $P^{*}_{DP}(K_n,k)$ is an open problem and is not known from $n=5$ onwards. This makes the proof of the proposition above, as well as answering the next natural question, more challenging than might be expected. 

\begin{ques} \label{ques: dual clique}
For each $n$, what is the smallest $k \geq 2$ such that the Dual-DP Shameful $(K_n,k)$-Inequality holds?
\end{ques}

As discussed above, the Dual-DP Shameful $(K_n,k)$-Inequality holds for all $k$ when $n=2,3,4$, while for $n \geq 5$ we see that this is not true. However, we expect that for every $n$-vertex graph $G$ the Dual-DP Shameful $(G,k)$-Inequality is true for all $k \in \N$ satisfying $k \ge n-1$.

\begin{conj}\label{conj: dual}
   For every $n$-vertex graph $G$,  $ \frac{P^{*}_{DP}(G,k+1)}{(k+1)^n} \geq \frac{P^{*}_{DP}(G,k)}{k^n}$ for all $k \in \N$ satisfying $k \ge n-1$.
\end{conj}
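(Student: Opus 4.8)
The plan is to prove the equivalent product form $P^{*}_{DP}(G,k+1)\,k^n \ge P^{*}_{DP}(G,k)\,(k+1)^n$ by building, from an optimal $k$-fold cover, a specific $(k+1)$-fold cover whose coloring count admits a clean independent-set recurrence mirroring the chromatic-polynomial identity $P(G,k+1)=\sum_{U}P(G-U,k)$ that underlies Dong's Theorem~\ref{thm: Dong}. Fix a full $k$-fold cover $\mathcal{H}=(L,H)$ with $L(v)=\{(v,i):i\in[k]\}$, so that each edge $uv$ carries a permutation $\sigma_{uv}$ of $[k]$ determined by its matching. Define the \emph{canonical extension} $\mathcal{H}^{+}$ to a full $(k+1)$-fold cover by adjoining one new slot $(v,k+1)$ to each $L(v)$ and extending every $\sigma_{uv}$ to fix $k+1$ (equivalently, adding the cross-edge $(u,k+1)(v,k+1)$ for each $uv\in E(G)$). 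First I would verify the recurrence: writing $\mathcal{H}_{G'}$ for the subcover of $\mathcal{H}$ restricted to $G'$, an $\mathcal{H}^{+}$-coloring $f$ is exactly the data of the independent set $U=f^{-1}(k+1)$ together with an $\mathcal{H}_{G-U}$-coloring of $G-U$, since edges inside $U$ are forbidden by the synchronized new slots, edges leaving $U$ are automatically satisfied, and edges inside $G-U$ see the original $\sigma_{uv}$. This yields
\[
P_{DP}(G,\mathcal{H}^{+})=\sum_{U\ \mathrm{ind.}}P_{DP}(G-U,\mathcal{H}_{G-U}),
\]
and in particular, taking $\mathcal{H}=\mathcal{H}^{*}$ optimal for $k$, the lower bound $P^{*}_{DP}(G,k+1)\ge\sum_{U}P_{DP}(G-U,\mathcal{H}^{*}_{G-U})$.

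This reduces Conjecture~\ref{conj: dual} to what I will call the \emph{per-cover Shameful inequality}: for every full $k$-fold cover $\mathcal{H}$ of $G$ and every $k\ge n-1$,
\[
\frac{P_{DP}(G,\mathcal{H}^{+})}{(k+1)^n}\ \ge\ \frac{P_{DP}(G,\mathcal{H})}{k^n}.
\]
Indeed, applying this to an optimal $\mathcal{H}^{*}$ gives $P^{*}_{DP}(G,k+1)\ge P_{DP}(G,\mathcal{H}^{+})\ge \tfrac{(k+1)^n}{k^n} P_{DP}(G,\mathcal{H}^{*})=\tfrac{(k+1)^n}{k^n} P^{*}_{DP}(G,k)$, which is exactly the claim. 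The advantage of this reformulation is that it is \emph{cover-agnostic}: it asks the Shameful inequality for a single, fixed cover and its canonical extension, and its combinatorial skeleton — the independent-set recurrence above — is identical to the one Dong exploits for the chromatic polynomial. My main effort would be to transplant Dong's argument for Theorem~\ref{thm: Dong} onto this skeleton, tracking the permutations $\sigma_{uv}$ through his estimates; the restrictions $\mathcal{H}_{G-U}$ are themselves full $k$-fold covers of the smaller graphs $G-U$, which makes an induction on $|V(G)|$ natural, with the chromatic case $\mathcal{H}=$ canonical cover (all $\sigma_{uv}=\mathrm{id}$) recovering Dong's inequality verbatim as the $\sigma$-independent base behavior.

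In carrying out the induction on $n$, the hypothesis $k\ge n-1$ enters in two compatible ways: it is precisely the threshold of Theorem~\ref{thm: Dong}, and every graph $G-U$ appearing on the right-hand side has at most $n-1$ vertices, so the inductive instances are invoked at $k\ge (n-1)-1$, comfortably within range. I would first confirm the scheme on trees and on the already-understood families (cycles, $K_4$), where full covers of the restricted forests have canonical labelings and hence $P_{DP}(G-U,\mathcal{H}_{G-U})=P(G-U,k)$, collapsing the per-cover inequality to Dong's; this both sanity-checks the recurrence and isolates where the permutations genuinely matter (around odd, or more generally non-bipartite, structure).

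The step I expect to be the crux, and the reason the paper settles only the complete bipartite case, is lower-bounding $\sum_{U}P_{DP}(G-U,\mathcal{H}^{*}_{G-U})$ when $\mathcal{H}^{*}$ is optimal for $G$: the restricted covers $\mathcal{H}^{*}_{G-U}$ need not be optimal for $G-U$, so one cannot simply feed $P^{*}_{DP}(G-U,\cdot)$ into the induction, and $P_{DP}(G,\mathcal{H})$ is not a fixed degree-$n$ polynomial in $k$ (the optimizing cover changes with $k$), so Dong's polynomial-coefficient manipulations do not transfer mechanically. Closing this gap seems to require genuine structural control of optimal full covers — for instance, a proof that an optimal cover may be chosen so that its vertex-deleted restrictions are near-optimal, or so that its canonical extension is again optimal — which is exactly the information that is missing in general, given that no closed form for $P^{*}_{DP}(K_n,k)$ is known for $n\ge5$. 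For complete bipartite graphs $K_{m,n}$, by contrast, the optimal covers and all their vertex-deleted restrictions have a transparent product structure, so the independent-set sums above can be evaluated and compared to $\tfrac{(k+1)^n}{k^n} P^{*}_{DP}$ directly; extending such explicit control to arbitrary $G$ is the principal obstacle to the full conjecture.
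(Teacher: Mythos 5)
The statement you were asked to prove is Conjecture~\ref{conj: dual}, which the paper leaves \emph{open}: its only evidence is the complete bipartite case, obtained by proving the identity $P^{*}_{DP}(K_{m,n},k)=P(K_{m,n},k)$ via the Rearrangement Inequality (Theorem~\ref{thm:dual}) and then citing Dong's Theorem~\ref{thm: Dong}. So there is no paper proof to compare against, and your proposal — as you yourself concede — is a reduction plus a plan, not a proof. The parts you do establish are sound: the recurrence $P_{DP}(G,\mathcal{H}^{+})=\sum_{U\ \mathrm{ind.}}P_{DP}(G-U,\mathcal{H}_{G-U})$ is correct (the synchronized edges $(u,k+1)(v,k+1)$ force $f^{-1}(k+1)$ to be independent, and since each $\sigma_{uv}$ fixes $k+1$, cross-edges leaving $U$ impose no constraint), and since $\mathcal{H}^{+}$ is again a full $(k+1)$-fold cover, your ``per-cover Shameful inequality'' applied to an optimal $\mathcal{H}^{*}$ would indeed yield the conjecture. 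The genuine gap is that this per-cover inequality carries the entire difficulty and nothing in Dong's argument transplants onto it: for a fixed cover, $P_{DP}(G,\mathcal{H})$ is a single integer, the optimizing cover changes with $k$, and $P^{*}_{DP}(G,k)$ need not agree with any polynomial even for large $k$ (odd cycles), so the polynomial-coefficient structure Dong exploits is simply absent. Your induction is also not well-founded as written: the $U=\varnothing$ term of the recurrence is $P_{DP}(G,\mathcal{H})$ itself, on all $n$ vertices, so the claim that every $G-U$ on the right-hand side has at most $n-1$ vertices is false, and that dominant term is precisely the one needing a bound.

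Two concrete warnings about the routes you float for closing the gap. First, the hope that the canonical extension of an optimal cover is again optimal already fails for odd cycles: for a full $k$-fold cover of $C_n$ whose cycle-product permutation has $f$ fixed points, the number of colorings is $(k-1)^n+(-1)^n(f-1)$; for odd $n$ the optimum takes $f=0$, giving $(k-1)^n+1$, but its canonical extension has product with exactly one fixed point and yields $k^n$, strictly below $P^{*}_{DP}(C_n,k+1)=k^n+1$. So any successful argument must tolerate extensions being suboptimal, and the per-cover inequality must be won with slack rather than by optimality-preservation. Second, your description of the complete bipartite case mischaracterizes the one settled instance: the paper never evaluates independent-set sums or vertex-deleted restrictions of optimal covers of $K_{m,n}$; it conditions on the independent set chosen inside one side and uses the Rearrangement Inequality to show every full $k$-fold cover is dominated by the canonical one, giving $P^{*}_{DP}(K_{m,n},k)=P(K_{m,n},k)$ for \emph{all} $k$, after which the threshold $k\geq m+n-1$ comes entirely from Dong. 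In short, your skeleton is a correctly verified reformulation, but the crux — lower-bounding $P_{DP}(G,\mathcal{H}^{+})$ against $\left(\frac{k+1}{k}\right)^{n}P_{DP}(G,\mathcal{H})$ for an optimal full cover $\mathcal{H}$ when $k\geq n-1$ — remains exactly as open as Conjecture~\ref{conj: dual} itself.
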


As some evidence towards this conjecture we prove that it holds for all complete bipartite graphs. This is a direct consequence of the following result.

\begin{thm} \label{thm:dual}
Let $m,n,k \in \N$. Then $P^{*}_{DP}(K_{m,n},k) = P(K_{m,n},k)$.   
\end{thm}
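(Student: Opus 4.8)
The plan is to prove the nontrivial direction $P^{*}_{DP}(K_{m,n},k)\le P(K_{m,n},k)$; since $P(K_{m,n},k)\le P^{*}_{DP}(K_{m,n},k)$ holds for every graph, this gives equality. Write the partite sets of $K_{m,n}$ as $A=\{a_1,\dots,a_m\}$ and $B=\{b_1,\dots,b_n\}$, and let $\mathcal{H}=(L,H)$ be an arbitrary full $k$-fold cover. Because $\mathcal H$ is full, for each edge $a_ib_j$ the set $E_{H}(L(a_i),L(b_j))$ is a perfect matching between the $k$-sets $L(a_i)$ and $L(b_j)$, hence determines a bijection $\sigma_{ij}\colon[k]\to[k]$ with $(a_i,p)(b_j,\sigma_{ij}(p))\in E(H)$ for all $p$. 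An $\mathcal H$-coloring is then exactly a choice of $\phi(a_i)=p_i\in[k]$ and $\phi(b_j)=c_j\in[k]$ with $\sigma_{ij}(p_i)\ne c_j$ for all $i,j$ (there are no constraints within $A$ or within $B$). Fixing the colors $p=(p_1,\dots,p_m)$ on $A$, the vertices of $B$ become independent and $b_j$ may take any color outside $F_j(p):=\{\sigma_{ij}(p_i):i\in[m]\}$, so the number of $\mathcal H$-colorings is $N(\mathcal H)=\sum_{p\in[k]^m}\prod_{j=1}^n\bigl(k-|F_j(p)|\bigr)$. For the canonical cover (all $\sigma_{ij}=\mathrm{id}$) this is $\sum_{p}(k-|\{p_i:i\in[m]\}|)^n$, which equals $P(K_{m,n},k)$ (summing over each $A$-coloring the number $(k-|\{p_i\}|)^n$ of proper extensions to $B$). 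Thus it suffices to prove $N(\mathcal H)\le P(K_{m,n},k)$ for every full cover.

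The key observation is that each factor in $N(\mathcal H)$ is a rearrangement of one fixed function. Define $h\colon[k]^m\to\R$ by $h(q)=k-|\{q_i:i\in[m]\}|$ (note $h\ge 0$), and for each $j$ let $\psi_j\colon[k]^m\to[k]^m$ be the bijection $\psi_j(p)=(\sigma_{1j}(p_1),\dots,\sigma_{mj}(p_m))$. Then $F_j(p)=\{(\psi_j(p))_i:i\in[m]\}$, so $k-|F_j(p)|=h(\psi_j(p))$ and therefore $N(\mathcal H)=\sum_{p\in[k]^m}\prod_{j=1}^n h(\psi_j(p))$. In other words, $N(\mathcal H)$ is the sum over the ground set $[k]^m$ of a product of $n$ functions, each a rearrangement (by $\psi_j$) of the single nonnegative function $h$. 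By the several-sequence form of the classical rearrangement inequality --- among all rearrangements of finitely many nonnegative sequences, the sum of their pointwise product is largest when they are all sorted in the same order --- this sum is maximized when every $\psi_j$ is replaced by the permutation sorting $h$ decreasingly, i.e.\ when all $n$ factors equal the common rearrangement $h^{*}$. Hence $N(\mathcal H)\le\sum_{p}h^{*}(p)^n=\sum_{p}h(p)^n=P(K_{m,n},k)$, the middle equality holding because summing $n$-th powers is invariant under rearrangement. This bound is attained by the canonical cover, which finishes the argument.

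I expect the main work to be the rearrangement step, and two points deserve care. First, one must check cleanly that $\psi_j$ is a genuine bijection of $[k]^m$ and that $k-|F_j(p)|=h(\psi_j(p))$, so that the several-sequence rearrangement inequality applies verbatim with all sequences being rearrangements of the \emph{same} $h$; this is exactly what lets the "common order'' be realized simultaneously and forces the maximizer to be $\sum_p h(p)^n$. Second, if one prefers a self-contained probabilistic formulation to citing the rearrangement inequality, the identical conclusion follows by conditioning on $p$: over a uniformly random $A$-coloring each $|F_j(p)|$ has the same distribution as $|\{p_i:i\in[m]\}|$ (apply the coordinatewise bijection $\psi_j$, which preserves the uniform measure), the function $(y_1,\dots,y_n)\mapsto\prod_j(k-y_j)$ is supermodular since each factor is nonnegative, and so the expectation of the product over couplings with these fixed marginals is maximized by the comonotone coupling --- precisely the canonical one, in which all the $|F_j(p)|$ coincide. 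Either route reduces the entire statement to a single correlation/rearrangement inequality, which is where essentially all of the content lies.
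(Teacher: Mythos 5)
Your proposal is correct and follows essentially the same route as the paper: both express the number of $\mathcal{H}$-colorings of $K_{m,n}$ as a sum of products in which each factor sequence is a rearrangement of the corresponding canonical-cover sequence, and then apply the several-sequence Rearrangement Inequality, with the canonical cover realizing the aligned (maximizing) order. The only difference is cosmetic: you sum over colorings of the $m$-side and take products over the $n$-side (the paper does the transpose), and your coordinatewise bijection $\psi_j$ makes the ``each factor is a rearrangement of one common sequence'' step immediate where the paper uses a short counting argument.
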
   

Interestingly, Theorem~\ref{thm:dual} is proven with the help of the Rearrangement Inequality without directly working with the formulae for the chromatic polynomials of complete bipartite graphs. Applying Theorems~\ref{thm: Dong} and~\ref{thm:dual} we get the following result. 

\begin{cor} \label{cor: application}
    Let $m, n, k \in \N$. Then $\frac{P^{*}_{DP}(K_{m,n},k+1)}{(k+1)^{m+n}} \geq \frac{P^{*}_{DP}(K_{m,n},k)}{k^{m+n}}$ for all $k \geq m+n-1$.
\end{cor}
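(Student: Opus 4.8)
The plan is to reduce the Dual-DP Shameful inequality for $K_{m,n}$ to the ordinary Shameful inequality via Theorem~\ref{thm:dual} and then invoke Dong's Theorem~\ref{thm: Dong}. Concretely, since $K_{m,n}$ has $m+n$ vertices, Theorem~\ref{thm:dual} gives $P^{*}_{DP}(K_{m,n},k)=P(K_{m,n},k)$ and $P^{*}_{DP}(K_{m,n},k+1)=P(K_{m,n},k+1)$, so that
$$\frac{P^{*}_{DP}(K_{m,n},k+1)}{(k+1)^{m+n}}=\frac{P(K_{m,n},k+1)}{(k+1)^{m+n}}\ \ge\ \frac{P(K_{m,n},k)}{k^{m+n}}=\frac{P^{*}_{DP}(K_{m,n},k)}{k^{m+n}},$$
where the middle inequality is Theorem~\ref{thm: Dong} applied to the $(m+n)$-vertex graph $K_{m,n}$; its hypothesis $k\ge (m+n)-1$ is exactly the hypothesis of the corollary. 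Thus the entire weight of the corollary rests on Theorem~\ref{thm:dual}, and the only point to verify in the reduction itself is the bookkeeping that the relevant vertex count is $m+n$.

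Since Theorem~\ref{thm:dual} carries all the content, I sketch how I would establish it. The reverse inequality $P(K_{m,n},k)\le P^{*}_{DP}(K_{m,n},k)$ is already recorded in the excerpt (and is witnessed by a canonical labeling), so it suffices to bound every full $k$-fold cover from above. Writing the partite sets as $A=\{a_1,\ldots,a_m\}$, $B=\{b_1,\ldots,b_n\}$ and recording the perfect matching of a full cover across the edge $a_ib_j$ by a permutation $\sigma_{ij}\in S_k$, an $\mathcal{H}$-coloring picks exactly one vertex $(a_i,x_i)$, $(b_j,y_j)$ from each part and is independent precisely when $y_j\ne\sigma_{ij}(x_i)$ for all $i,j$. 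Fixing the colors $x=(x_1,\ldots,x_m)$ of $A$, each $b_j$ then has $k-|\{\sigma_{ij}(x_i):i\in[m]\}|$ free colors, independently over $j$. Setting $g(z)=k-|\{z_i:i\in[m]\}|\ge 0$ for $z\in[k]^m$ and $\Sigma_j(x)=(\sigma_{1j}(x_1),\ldots,\sigma_{mj}(x_m))$, this yields
$$P_{DP}(K_{m,n},\mathcal{H})=\sum_{x\in[k]^m}\prod_{j=1}^n g(\Sigma_j(x)).$$

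The key step is that each $\Sigma_j$ is a bijection of the finite set $[k]^m$, so the displayed sum is a rearrangement of the nonnegative values $\{g(z):z\in[k]^m\}$. By the Rearrangement Inequality for products of $n$ sequences, $\sum_x\prod_j g(\Sigma_j(x))$ is maximized when the $\Sigma_j$ are all equal, and any such common bijection gives the value $\sum_x g(x)^n$; hence $P_{DP}(K_{m,n},\mathcal{H})\le\sum_x g(x)^n$ for every full cover. Finally $\sum_{x\in[k]^m}g(x)^n=P(K_{m,n},k)$ by the direct count of proper colorings (color $A$ by $x$, then each vertex of $B$ has $k-|\{x_i\}|$ choices), so taking the maximum over full covers gives $P^{*}_{DP}(K_{m,n},k)\le P(K_{m,n},k)$, and equality follows. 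The main obstacle is the correct formulation of the multi-sequence Rearrangement Inequality together with the observation that one may relax from the product bijections $\Sigma_j$ coming from covers to arbitrary bijections of $[k]^m$ (this only enlarges the feasible set, so it preserves the upper bound) and the check that the aligned optimum equals the canonical value $\sum_x g(x)^n$ regardless of which common ordering is chosen.
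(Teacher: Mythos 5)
Your proof of the corollary is exactly the paper's: substitute $P^{*}_{DP}(K_{m,n},\cdot)=P(K_{m,n},\cdot)$ via Theorem~\ref{thm:dual} and invoke Dong's Theorem~\ref{thm: Dong} on the $(m+n)$-vertex graph $K_{m,n}$, whose hypothesis $k\ge m+n-1$ matches. Your supplementary sketch of Theorem~\ref{thm:dual} is also essentially the paper's argument (encode the cross-matchings as permutations, observe each side's count is a rearrangement of a fixed multiset of nonnegative values, and apply the Rearrangement Inequality), differing only in which partite set is fixed first.
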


\section{List and DP Shameful Inequalities} \label{sec:shameful}

In this section, we answer Question~\ref{ques:main}(i) and Question~\ref{ques:main}(ii) in Theorem~\ref{thm: listSC} and Theorem~\ref{thm: DPSC} respectively.

\begin{customthm} {\bf \ref{thm: listSC}} 
For any $n$-vertex graph $G$, $\frac{P_{\ell}(G,k+1)}{(k+1)^n} \geq \frac{P_{\ell}(G,k)}{k^n}$ for all $k \in \N$.
\end{customthm}

\begin{proof}

Let $L$ be a $(k+1)$-assignment of $G$ such that $P(G,L) = P_{\ell}(G,k+1)$, and let $a = P_{\ell}(G,k+1)$. Let $\{l_{i} : i \in [a]\}$ be the set of proper $L$-colorings of $G$. Let $S$ be the set of $k$-assignments of $G$ obtained by deleting one element from $L(v)$ for each $v \in V(G)$. Let $T = \{P(G,L') : L' \in S\}$. For each $v \in V(G)$, we delete one element from $L(v)$ uniformly at random. Let $Y$ denote the resulting $k$-assignment. Clearly $Y$ is an element of $S$.

For each $i \in [a]$, suppose $E_{i}$ is the event that $l_{i}$ is a proper $Y$-coloring of $G$. Note that $l_{i}$ is a proper $Y$-coloring of $G$ if and only if $l_{i}(v) \in Y(v)$ for each $v \in V(G)$. Since for each $v \in V(G)$ the probability that an element from $L(v)$ is deleted is $1/(k+1)$, $\mathbb{P}[E_{i}] = (1-1/(k+1))^{n} = k^{n}/(k+1)^n$. Let $X_{i}$ be the indicator random variable such that $X_{i} = 1$ when $E_{i}$ occurs and $X_{i} = 0$ otherwise. Let $X = \Sigma_{i \in [a]} X_{i}$. Equivalently, $X$ equals the number of proper $Y$-colorings of $G$ in $\{l_{i} : i \in [a]\}$. Since there is no proper $Y$-coloring of $G$ that is not an element of the set $\{l_{i} : i \in [a]\}$, we have $X = P(G,Y)$. By linearity of expectation, $\mathbb{E}[X] = ak^{n}/(k+1)^n = P_{\ell}(G,k+1)k^{n}/(k+1)^n$. Hence there exists an element, $t \in T$ such that $t \leq \mathbb{E}[X]$. Thus there exists an $L^{*}\in S$ such that $P_{\ell}(G,L^{*}) \leq P_{\ell}(G,k+1)k^{n}/(k+1)^n$, and since $P_{\ell}(G,k) \leq P_{\ell}(G,L^{*})$, the result follows.
\end{proof}

The following result is a straightforward application of Theorem~\ref{thm: listSC}.

\begin{customcor}  {\bf \ref{cor: cool}}
    Given a graph $G$ and $k \in \N$, let $c_{k}$ denote a uniformly chosen random $k$-coloring of $G$. If $\mathbb{P}[c_{k+1} \text{~is proper}] < \mathbb{P}[c_{k} \text{~is proper}]$, then $P_{\ell}(G,k) < P(G,k)$.
\end{customcor}

\begin{proof}
It suffices to show that $P_{\ell}(G,k) \neq P(G,k)$. For the sake of contradiction, suppose $P_{\ell}(G,k) = P(G,k)$. Then we have ${P_{\ell}(G,k+1)}/{(k+1)^n} \leq {P(G,k+1)}/{(k+1)^n} < {P(G,k)}/{k^n} = {P_{\ell}(G,k)}/{k^n}$ which contradicts Theorem~\ref{thm: listSC}.    
\end{proof}

Now we extend the proof of Theorem~\ref{thm: listSC} to the context of DP-coloring.

\begin{customthm} {\bf \ref{thm: DPSC}}  
For any $n$-vertex graph $G$, $\frac{P_{DP}(G,k+1)}{(k+1)^n} \geq \frac{P_{DP}(G,k)}{k^n}$ for all $k \in \N$.
\end{customthm}

\begin{proof}

Let $\mathcal{H} = (L,H)$ be a $(k+1)$-fold cover of $G$ such that $P_{DP}(G,\mathcal{H}) = P_{DP}(G,k+1)$, and let $a = P_{DP}(G,k+1)$. Let $\{I_{i} : i \in [a]\}$ be the set of $\mathcal{H}$-colorings of $G$. Let $S$ be the set of $k$-fold covers of $G$ obtained by deleting one element from $L(v)$ for each $v \in V(G)$. Let $T = \{P_{DP}(G,\mathcal{H}') : \mathcal{H}' \in S\}$. For each $v \in V(G)$, we delete one element from $L(v)$ uniformly at random. Let $\mathcal{Y}$ denote the resulting $k$-fold cover of $G$. Suppose $\mathcal{Y} = (L_{\mathcal{Y}}, H_{\mathcal{Y}})$. Clearly $\mathcal{Y}$ is an element of $S$.

For each $i \in [a]$, suppose $E_{i}$ is the event that $I_{i}$ is a $\mathcal{Y}$-coloring of $G$. Note that $I_{i}$ is a $\mathcal{Y}$-coloring of $G$ if and only if $I_{i}\cap L(v) \in L_{\mathcal{Y}}(v)$ for each $v \in V(G)$. Since for each $v \in V(G)$ the probability that an element from $L(v)$ is deleted is $1/(k+1)$, $\mathbb{P}[E_{i}] = (1-1/(k+1))^{n} = k^{n}/(k+1)^n$. Let $X_{i}$ be the indicator random variable such that $X_{i} = 1$ when $E_{i}$ occurs and $X_{i} = 0$ otherwise. Let $X = \Sigma_{i \in [a]} X_{i}$. Equivalently, $X$ equals the number of $\mathcal{Y}$-colorings of $G$ in $\{I_{i} : i \in [a]\}$. Since there are no $\mathcal{Y}$-colorings of $G$ outside the set $\{I_{i} : i \in [a]\}$, we have $X = P_{DP}(G,\mathcal{Y})$. By linearity of expectation, $\mathbb{E}[X] = ak^{n}/(k+1)^n = P_{DP}(G,k+1)k^{n}/(k+1)^n$. Hence there exists an element, $t \in T$ such that $t \leq \mathbb{E}[X]$. Thus there exists an element, $\mathcal{H}^{*}$ in $S$ such that $P_{DP}(G,\mathcal{H}^{*}) \leq P_{DP}(G,k+1)k^{n}/(k+1)^n$, and since $P_{DP}(G,k) \leq P_{DP}(G,\mathcal{H}^{*})$, the result follows.
\end{proof}

\section{Dual-DP Shameful Inequality}\label{sec: Dual}

The dual DP color function can be explicitly calculated for cycles (see~\cite{M21}) and $K_{4}$ (see~\cite{M24}). For each $n \geq 3$, if $n$ is even and $k \in \N$, $P_{DP}^{*}(C_{n}, k) = P(C_{n}, k) = (k-1)^n +(k-1)$.  Also if $n$ is odd and $k \geq 2$, $P_{DP}^{*}(C_{n}, k) = (k-1)^n + 1 > P(C_n,k)$. Furthermore, for even values of $k \geq 2$, $P_{DP}^{*}(K_{4}, k) = k^4-6k^3+15k^2-13k$ and for odd $k \geq 3$, $P_{DP}^{*}(K_{4}, k) = k^4-6k^3+15k^2-13k-3$. It is easy to show that the Dual-DP Shameful $(G,k)$-Inequality holds for all $k \in \N$ when $G$ is a cycle or a $K_{4}$.

In this section we will first focus on proving that Dual-DP Shameful $(G,k)$-inequality does not hold for all graphs $G$ and $k \in \N$ (Proposition~\ref{pro:sharpk5k6}). Since finding an explicit formula for $P^{*}_{DP}(K_n,k)$ is an open problem and is not known from $n=5$ onwards, the proof of this Proposition is more challenging than might be expected. We will need three results before proving Proposition~\ref{pro:sharpk5k6}.

\begin{pro} \label{pro: duallower}
    Let $G$ be a graph and $k \geq 2$. Then $P_{DP}^{*}(G, k) \geq 2$.
\end{pro}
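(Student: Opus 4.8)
The plan is to use that $P_{DP}^{*}(G,k)$ is defined as a \emph{maximum} over all full $k$-fold covers of $G$, so it suffices to exhibit one full $k$-fold cover $\mathcal{H}$ of $G$ that admits at least two distinct $\mathcal{H}$-colorings. The tempting first move is to take a cover with a canonical labeling: such a cover has exactly $P(G,k)$ colorings and $P_{DP}^{*}(G,k) \ge P(G,k)$, so the bound would follow whenever $G$ has at least two proper $k$-colorings. The obstacle is precisely that this can fail: $P(G,k)$ may be $0$ (for instance $G = K_n$ with $k < n$), so the canonical cover is useless in general. This is exactly the point where one must exploit the extra freedom in choosing the matchings of a full cover, rather than working with proper colorings of $G$.

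To get around this, I would name the vertices of $H$ so that $L(v) = \{(v,i) : i \in [k]\}$ and, for \emph{every} edge $uv \in E(G)$, use the cross-edge matching induced by the transposition $\tau = (1\,2)$ that swaps colors $1$ and $2$ and fixes all other colors; that is, put $(u,i)(v,\tau(i)) \in E(H)$ for each $i \in [k]$. Since $\tau$ is a permutation of $[k]$, each such matching is perfect, so $\mathcal{H} = (L,H)$ is a full $k$-fold cover of $G$. Here the hypothesis $k \ge 2$ is used only to guarantee that such a nonidentity transposition exists (and indeed the claim genuinely fails for $k = 1$ when $G$ has an edge).

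Finally I would verify that the two ``constant'' transversals $I_1 = \{(v,1) : v \in V(G)\}$ and $I_2 = \{(v,2) : v \in V(G)\}$ are both $\mathcal{H}$-colorings. For any edge $uv$, the vertex $(u,1)$ is matched to $(v,\tau(1)) = (v,2)$, so $(u,1)(v,1) \notin E(H)$, and $I_1$ is therefore an independent set of size $n$; symmetrically $(u,2)$ is matched to $(v,\tau(2)) = (v,1)$, so $(u,2)(v,2) \notin E(H)$ and $I_2$ is independent. As $I_1 \neq I_2$, we obtain $P_{DP}(G,\mathcal{H}) \ge 2$ and hence $P_{DP}^{*}(G,k) \ge 2$. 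The only real content is the recognition that the canonical cover does not work and that the uniform transposition trick forces two colorings to survive even when $G$ is not $k$-colorable; the independence checks themselves are immediate.
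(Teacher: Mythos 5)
Your proof is correct and uses essentially the same construction as the paper: a full cover whose cross-edge matchings swap colors $1$ and $2$, so that the two constant transversals $\{(v,1)\}_{v}$ and $\{(v,2)\}_{v}$ both survive as $\mathcal{H}$-colorings. The only difference is cosmetic --- the paper first reduces to $k=2$ via monotonicity and handles components separately, whereas you build the $k$-fold cover directly, which is if anything slightly more self-contained.
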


\begin{proof}
Notice that if $G$ is edgeless then $P_{DP}^{*}(G, k) = k^{|V(G)|} \geq 2$. Suppose $|E(G)| \geq 1$. Note that if $G$ is a disconnected graph with components: $G_1, \ldots, G_t$, then $P^*_{DP}(G, k) = \prod_{i=1}^{t} P^*_{DP}(G_i, k)$. So we may assume that $G$ is connected. Since $P_{DP}^{*}(G, k) \geq P_{DP}^{*}(G, 2)$ for $k \geq 2$, it suffices to construct a full $2$-fold cover $\mathcal{H} = (L, H)$ of $G$ such that $P_{DP}(G,\mathcal{H}) \geq 2$.  Let $L(v) = \{(v,i): i \in [2]\}$ for each $v \in V(G)$.  Now, we construct $H$ as follows.  First, let $V(H) = \bigcup_{v \in V(G)} L(v)$, and create edges so that $(v,1)(v,2) \in E(H)$ for each $v \in V(G)$.  Finally, for each $uv \in E(G)$, create edges so that $(u,1)(v,2)$, $(u,2)(v,1) \in E(H)$. Clearly $\{(v,1):v \in V(G)\}$ and $\{(v,2):v \in V(G)\}$ are $\mathcal{H}$-colorings of $G$. Thus $P_{DP}(G,\mathcal{H}) \geq 2$, and the desired result follows.
\end{proof}

\begin{pro} [\cite{DKM23}, \cite{KM20}] \label{pro:canonical} 
Suppose $G$ is a graph and $\mathcal{H} = (L,H)$ is a full $k$-fold cover of $G$. Suppose also that $F$ is an acyclic subgraph of $G$.  Then, the subcover of $\mathcal{H}$ restricted to $F$ has a canonical labeling.
\end{pro}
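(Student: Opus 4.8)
The plan is to reduce the statement to the known fact that a \emph{full} cover of a \emph{tree} admits a canonical labeling, and to carry out the peeling-off-a-leaf induction directly, with the only real work being to track which cross-edges survive the restriction operation. The first step I would take is to show that $\mathcal{H}' = (L',H')$, the subcover of $\mathcal{H}$ restricted to $F$, is itself a full $k$-fold cover of $F$. Each part is unchanged, $L'(v)=L(v)$, so $|L'(v)|=k$, and $H'[L(v)] = H[L(v)]$ is still complete because forming $H'$ only deletes cross-edges. By condition (3) for $\mathcal{H}$ together with the definition of the subcover, the surviving cross-edges of $H'$ join only parts $L(u),L(v)$ with $uv \in E(F)$, and for such an edge $E_{H'}(L(u),L(v)) = E_H(L(u),L(v))$ is a matching of size $k$ since $\mathcal{H}$ is full. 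Thus every edge of $F$ carries a \emph{perfect} matching in $H'$, and the key structural fact is that the cross-edges of $H'$ incident to a part $L(v)$ correspond exactly to the $F$-edges at $v$.

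Next I would prove the claim by induction on $|V(F)|$. The base case $|V(F)| = 1$ is immediate: label $L(v) = \{(v,j) : j \in [k]\}$ arbitrarily, there being no edge constraints. For the inductive step, note that any nonempty forest has a vertex $w$ with $\deg_F(w) \le 1$ (the endpoint of a maximal path if $F$ has an edge, any vertex otherwise). The subgraph $F - w$ is again acyclic with fewer vertices, so by the induction hypothesis the subcover of $\mathcal{H}$ restricted to $F-w$ has a canonical labeling; since the restriction operation composes, this labels all parts $L(x)$ with $x \ne w$ compatibly with every edge of $F-w$, which are precisely the $F$-edges avoiding $w$. It then remains only to name $L(w)$. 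If $\deg_F(w)=0$, I name $L(w)$ arbitrarily, and there is nothing to check because no cross-edge of $H'$ meets $L(w)$. If $\deg_F(w)=1$ with neighbor $u$, the unique surviving cross-edges at $L(w)$ form the perfect matching $E_{H'}(L(u),L(w))$, and I name the vertices of $L(w)$ so that the matching partner of the already-named $(u,j)$ is called $(w,j)$; this forces $(u,j)(w,j) \in E(H')$ for every $j$ and extends the canonical labeling to all of $F$.

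The main obstacle is precisely this extension step, and it is where fullness is essential. Two points must be verified: first, that $E_{H'}(L(u),L(w))$ is a \emph{perfect} matching, so that each of the $k$ already-labeled vertices $(u,j)$ has exactly one partner in $L(w)$ and the proposed naming of $L(w)$ is both well defined and a bijection onto $L(w)$ — this is exactly the content of $\mathcal{H}$ being full; and second, that restricting to $F$ has deleted every cross-edge at $w$ other than those to $L(u)$, so extending the labeling cannot violate any constraint already fixed on $F-w$ — this is the structural fact recorded above, namely that cross-edges at $L(w)$ in $H'$ match the $F$-edges at $w$, of which there is at most one. Once these two observations are in place the induction closes, establishing that the subcover restricted to $F$ has a canonical labeling.
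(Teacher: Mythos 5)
The paper states this proposition without proof, citing \cite{DKM23} and \cite{KM20}; your argument is correct and is precisely the ``easy inductive argument'' the paper alludes to for trees: peel off a vertex of degree at most one in the forest, observe that fullness makes each surviving cross-matching perfect, and extend the labeling along that matching. No gaps.
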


Recall that a vertex $u$ in $G$ is a \emph{universal vertex} if $uv \in E(G)$ for each $v \in V(G)-\{u\}$.

\begin{lem} \label{pro:dualthreeupper}
    Suppose $G$ is a graph with universal vertex $w$ such that $|V(G)| \geq 2$. Let $G' = G - w$. Then $P_{DP}^{*}(G, 3) \leq P_{DP}^{*}(G', 3) + 3$. Consequently for each $n \in \N$, $P_{DP}^{*}(K_{n}, 3) \leq 3n$.
\end{lem}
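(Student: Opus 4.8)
The plan is to fix a full $3$-fold cover $\mathcal{H}=(L,H)$ of $G$ attaining $P^*_{DP}(G,3)$ and to bound its number of $\mathcal{H}$-colorings by classifying them according to the vertex chosen at the universal vertex $w$. The restriction $\mathcal{H}_{G'}$ of $\mathcal{H}$ to $G'$ is again a full $3$-fold cover (the only matchings deleted are those incident to $w$, and $G'$ is an induced subgraph), so the number $N$ of $\mathcal{H}_{G'}$-colorings of $G'$ satisfies $N \le P^*_{DP}(G',3)$. The whole difficulty is to show that the three ``layers'' of $\mathcal{H}$-colorings of $G$ (one per choice of $(w,i)$) overlap enough that their total is at most $N+3$ rather than the trivial $3N$.

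First I would normalize the labeling. The star $S_w$ consisting of the edges $\{wv : v \in V(G')\}$ is acyclic, so by Proposition~\ref{pro:canonical} the subcover of $\mathcal{H}$ restricted to $S_w$ has a canonical labeling; after renaming we may assume $(w,i)(v,i) \in E(H)$ for every $v \in V(G')$ and $i \in [3]$. With this labeling the unique neighbor of $(w,i)$ inside $L(v)$ is $(v,i)$, so an $\mathcal{H}$-coloring sending $w$ to $(w,i)$ restricts to an $\mathcal{H}_{G'}$-coloring of $G'$ that avoids $(v,i)$ at every $v$, and conversely any such coloring of $G'$ extends. Hence, writing $A_i$ for the set of $\mathcal{H}_{G'}$-colorings of $G'$ that use no index-$i$ vertex, the number of $\mathcal{H}$-colorings of $G$ equals $|A_1|+|A_2|+|A_3|$.

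The key counting step is then bookkeeping by how many indices a coloring of $G'$ actually uses. Each $\mathcal{H}_{G'}$-coloring $c$ is counted in $A_i$ precisely for the indices $i$ it omits, so $|A_1|+|A_2|+|A_3| = \sum_c (3 - |\{\text{indices used by }c\}|)$. Since $|V(G')|\ge 1$, every $c$ uses at least one index, so a coloring using exactly one index contributes $2$, one using two indices contributes $1$, and one using all three contributes $0$. Letting $a$, $b$, $d$ count the $\mathcal{H}_{G'}$-colorings using exactly one, two, and three indices respectively, the total is $2a+b = (a+b+d)+(a-d) = N + (a-d) \le N + a$. A coloring using a single index $j$ is just the transversal $v \mapsto (v,j)$, and there are only three indices, so $a \le 3$; combined with $d \ge 0$ this yields at most $N+3 \le P^*_{DP}(G',3)+3$ colorings, which bounds $P^*_{DP}(G,3)$ as claimed.

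The consequence for cliques follows by induction on $n$: the single vertex $K_1$ has $P^*_{DP}(K_1,3)=3$, and since $w$ is universal in $K_n$ with $K_n - w = K_{n-1}$, the lemma gives $P^*_{DP}(K_n,3) \le P^*_{DP}(K_{n-1},3)+3$, hence $P^*_{DP}(K_n,3)\le 3n$. I expect the main obstacle to be precisely the sharp overlap bound: the naive union bound only gives $3N$, and extracting the additive constant $3$ hinges on using the canonical labeling to recognize that the only colorings double-counted across the three layers are the (at most three) monochromatic transversals.
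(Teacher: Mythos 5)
Your proof is correct and takes essentially the same route as the paper: both fix an optimal full cover, impose a canonical labeling on the star at $w$ via Proposition~\ref{pro:canonical}, classify the $\mathcal{H}'$-colorings of $G'$ by how many second-coordinate indices they use, observe that these extend in $2$, $1$, or $0$ ways respectively, and bound the single-index class by $3$. The only cosmetic difference is that you phrase the count as $|A_1|+|A_2|+|A_3|=2a+b=N+(a-d)$ where the paper writes $2|\mathcal{B}_1|+|\mathcal{B}_2|\le 3+|\mathcal{B}|$; these are the same inequality.
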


\begin{proof}
    Suppose $\mathcal{H} = (L,H)$ is a 3-fold cover of $G$ such that $P_{DP}(G, \mathcal{H}) = P_{DP}^{*}(G, 3)$.  By Proposition~\ref{pro:canonical} we may assume that for each $u \in V(G)$, $L(u) = \{ (u,i) : i \in [3] \}$, and for each $i \in [3]$ and $v \in V(G')$, $(v,i)(w,i) \in E(H)$. Let $\mathcal{H}' = (L', H')$ be the subcover of $\mathcal{H}$ restricted to $G'$.  Suppose $\mathcal{A}$ is the set of $\mathcal{H}$-colorings of $G$ and suppose $\mathcal{B}$ is the set of $\mathcal{H}'$-colorings of $G'$. Let $f: \mathcal{A} \rightarrow \mathcal{B}$ be the function given by $f(I) = I - (I \cap L(w))$. Let $\mathcal{B}_{1}$ be the subset of $\mathcal{B}$ such that $B \in \mathcal{B}_1$ if and only if there is an $x \in [3]$ such that every element of $B$ has second coordinate $x$. Let $\mathcal{B}_{2}$ be the subset of $\mathcal{B}$ such that $B \in \mathcal{B}_2$ if and only if $B \notin \mathcal{B}_1$ and there is an $x \in [3]$ such that every element of $B$ has second coordinate in the set $[3] - \{x\}$. Finally, let $\mathcal{B}_{3} = \mathcal{B} - (\mathcal{B}_1 \cup \mathcal{B}_2)$. Notice that $\{\mathcal{B}_{1}, \mathcal{B}_{2}, \mathcal{B}_{3}\}$ is a partition of $\mathcal{B}$ and $|\mathcal{B}_{1}| \leq 3$. Clearly, 
    $$P_{DP}(G, \mathcal{H}) = |\mathcal{A}| = \sum_{I' \in \mathcal{B}} |f^{-1}(I')| = \sum_{I' \in \mathcal{B}_{1}} |f^{-1}(I')| + \sum_{I' \in \mathcal{B}_{2}} |f^{-1}(I')| + \sum_{I' \in \mathcal{B}_{3}} |f^{-1}(I')|.$$ 
    Note that each $\mathcal{H}'$-coloring of $G'$ in $\mathcal{B}_{1}$ is a subset of exactly two $\mathcal{H}$-colorings of $G$.  Moreover, each $\mathcal{H}'$-coloring of $G'$ in $\mathcal{B}_{2}$ is a subset of exactly one $\mathcal{H}$-coloring of $G$.  Finally notice that no $\mathcal{H}'$-colorings of $G'$ in $\mathcal{B}_{3}$ are subsets of an $\mathcal{H}$-coloring of $G$. This means that 
    \begin{align*}
    P_{DP}(G, \mathcal{H}) = \sum_{I' \in \mathcal{B}_{1}} |f^{-1}(I')| + \sum_{I' \in \mathcal{B}_{2}} |f^{-1}(I')| + \sum_{I' \in \mathcal{B}_{3}} |f^{-1}(I')| &= 2 |\mathcal{B}_{1}| + |\mathcal{B}_{2}| + 0 \\ 
    &\leq 3 + |\mathcal{B}_{1}| + |\mathcal{B}_{2}| \\
    &\leq 3 + |\mathcal{B}| \\ 
    &\leq 3 + P_{DP}^{*}(G',3).    
    \end{align*}
Finally, the fact that $P^{*}_{DP}(K_n,3) \leq 3n$ follows from the fact that $P^{*}_{DP}(K_1,3)=3$ and an easy inductive argument.
\end{proof}

We are now ready to prove Proposition~\ref{pro:sharpk5k6}.

\begin{custompro} {\bf \ref{pro:sharpk5k6}}
    If $G$ is a $K_{n}$ where $n \geq 5$ or $K_{n,n}$ where $n \geq 10$, then the Dual-DP Shameful $(G,2)$-Inequality does not hold.  
\end{custompro}

\begin{proof}
If $G$ is a $K_{n}$ with $n \geq 5$ we need to show that ${P^{*}_{DP}(G,3)}/{3^n} < {P^{*}_{DP}(G,2)}/{2^n}$. From Lemma~\ref{pro:dualthreeupper} it follows that $P_{DP}^{*}(G, 3) \leq 3n$. Furthermore, by Proposition~\ref{pro: duallower}, $P_{DP}^{*}(G, 2) \geq 2$. Thus $P^{*}_{DP}(G,3)/{3^n} \leq 3n/3^{n}$ and $P^{*}_{DP}(G,2)/{2^n} \geq 2/2^{n}$. Since $3n/3^{n} < 2/2^{n}$ whenever $n \geq 5$, we have $P^{*}_{DP}(G,3)/{3^n} \leq 3n/3^{n} < 2/2^{n} \leq P^{*}_{DP}(G,2)/{2^n}$.

For $G = K_{n,n}$ where $n \geq 10$, the desired result follows from the fact that the Shameful $(G,2)$-Inequality doesn't hold (see Section~\ref{intro}) and Theorem~\ref{thm:dual} below.
\end{proof}

\subsection{Dual-DP Shameful Inequality for Complete Bipartite graphs} \label{sec:diff}

In this section, we make further progress towards Question~\ref{ques:main}(iii) in the case of complete bipartite graphs. In Theorem~\ref{thm:dual}, we show that for complete bipartite graphs, the dual DP color function always equals the chromatic polynomial. We prove this without directly working with the formulae for the chromatic polynomial or the dual DP color function of complete bipartite graphs. We first state a version of the Rearrangement Inequality which we will use in the proof of Theorem~\ref{thm:dual}. 

\begin{thm} [Rearrangement Inequality, \cite{R52}] \label{thm:rearrangement}
    Let $k, t \in \N$. For each $(i,j) \in [k] \times [t]$, suppose $y_{ij} \geq 0$ and $y_{i1} \leq \ldots \leq y_{it}$ for each $i \in [k]$. For each $i \in [k]$, let $\sigma_{i}$ be an arbitrary permutation of $[t]$. Then, $\sum_{j=1}^{t} \prod_{i=1}^{k} y_{i\sigma_{i}(j)} \leq \sum_{j=1}^{t} \prod_{i=1}^{k} y_{ij}$.   
\end{thm}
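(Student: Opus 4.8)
The plan is to prove the equivalent maximization statement: among all tuples of permutations $(\sigma_1,\ldots,\sigma_k)$, the quantity $S(\sigma_1,\ldots,\sigma_k)=\sum_{j=1}^{t}\prod_{i=1}^{k}y_{i\sigma_i(j)}$ is maximized when every $\sigma_i$ is the identity. Since the identity tuple yields exactly the right-hand side, and all rows are already sorted increasingly, showing that the identity attains the maximum proves the inequality for the arbitrary tuple appearing on the left. First I would reduce to the case where each row is strictly increasing and strictly positive: replacing $y_{ij}$ by $y_{ij}+\epsilon j$ preserves $0\le y_{i1}^{\epsilon}<\cdots<y_{it}^{\epsilon}$, and both sides are polynomial, hence continuous, in the entries; so it suffices to prove the strict-positive case for each fixed tuple and let $\epsilon\to 0$.

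Since there are only finitely many tuples of permutations, a maximizer of $S$ exists, and the core of the argument is an exchange analysis at a maximizer. For a fixed row $m$ and two columns $p,q$, swapping the entries that row $m$ contributes to these two columns gives another valid tuple, and optimality forces $(C_p^{(m)}-C_q^{(m)})(u_p^{(m)}-u_q^{(m)})\ge 0$, where $u_j^{(m)}=y_{m\sigma_m(j)}$ and $C_j^{(m)}=\prod_{i\ne m}y_{i\sigma_i(j)}$ is the product of the remaining rows in column $j$. Writing $F_j=\prod_i y_{i\sigma_i(j)}=C_j^{(m)}u_j^{(m)}$ for the full product in column $j$, I would deduce that at the maximizer the columns are simultaneously concordant with $F$ in every row: if $F_p>F_q$ then $u_p^{(m)}\ge u_q^{(m)}$ for all $m$, since otherwise one factor is negative, forcing $C_p^{(m)}\le C_q^{(m)}$ and hence $F_p\le F_q$, a contradiction.

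With strict positivity this concordance becomes rigid. If two columns had equal full products $F_p=F_q$, the displayed inequality would force $u_p^{(m)}=u_q^{(m)}$ for every $m$, because a strict inequality in either direction would combine with $C_p^{(m)}\ge C_q^{(m)}>0$ (or the reverse) to make the $F$'s differ; since the entries of each row are distinct, this would force $\sigma_m(p)=\sigma_m(q)$, impossible for $p\ne q$. Hence all $F_j$ are distinct at the maximizer. Reindexing columns so that $F_1<\cdots<F_t$, concordance makes each row $(u_j^{(m)})_j$ strictly increasing; as the entries of row $m$ are precisely the distinct values $y_{m1}<\cdots<y_{mt}$, this yields $u_j^{(m)}=y_{mj}$ for all $m$ and $j$. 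Therefore $F_j=\prod_i y_{ij}$, the maximum value equals $\sum_j\prod_i y_{ij}$, and letting $\epsilon\to 0$ finishes the proof.

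The main obstacle is exactly the coupling between the $k$ sequences: unlike the two-sequence rearrangement inequality, a single adjacent swap in one row need not increase $S$, because the sign of the change is governed by the product $C^{(m)}$ of the other rows, which may oppose the swap. The resolution is not to sort the rows one at a time but to read off the necessary local conditions at a global maximizer all at once; together these force every row to be concordant with the common ordering induced by the full products $F_j$, and that is what pins the maximizer down to the fully aligned configuration. The perturbation step in the first paragraph then absorbs all complications from ties and zero entries, which is why I prefer to establish strict positivity at the outset.
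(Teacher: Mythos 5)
Your proof is correct. There is nothing in the paper to compare it against: the paper states this result verbatim with a citation to Ruderman \cite{R52} and uses it as a black box in the proof of Theorem~\ref{thm:dual}, so any correct self-contained argument is by definition a different route. The classical derivation of the $k$-row statement reduces to the familiar two-sequence rearrangement inequality and inducts on the number of rows, using nonnegativity to keep the auxiliary products ordered; you instead argue variationally. You take a global maximizer over the finitely many permutation tuples, extract the pairwise-swap necessary condition $(C_p^{(m)}-C_q^{(m)})(u_p^{(m)}-u_q^{(m)})\ge 0$ from the identity $\Delta S=-(C_p^{(m)}-C_q^{(m)})(u_p^{(m)}-u_q^{(m)})$, and show that once ties and zeros are removed by the $\epsilon j$ perturbation, these conditions force the column products $F_j$ to be pairwise distinct and every row to be sorted concordantly with them; a common relabeling of the columns (which leaves $S$ invariant) then pins the maximizer to the fully aligned configuration $u_j^{(m)}=y_{mj}$. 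Each step checks out: the perturbation does yield strictly positive, strictly increasing rows and the inequality passes to the limit $\epsilon\to 0$ by continuity; the concordance deduction needs only nonnegativity, while the distinctness step (equal $F_p=F_q$ forcing $\sigma_m(p)=\sigma_m(q)$) genuinely requires $C_q^{(m)}>0$, which you correctly flag as the reason for establishing strict positivity up front. You also rightly note the obstacle that makes the naive generalization fail --- sorting one row toward its own order can decrease $S$ because the sign of a swap is governed by the complementary products --- and your resolution (reading all local conditions simultaneously at a global maximizer rather than sorting rows one at a time) is sound. What your route buys is symmetry in the $k$ rows and independence from the two-sequence case, at the mild cost of a finiteness appeal and a limiting argument.
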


We are now ready to prove Theorem~\ref{thm:dual}.

\begin{customthm} {\bf \ref{thm:dual}} 
Let $m,n,k \in \N$. Then $P^{*}_{DP}(K_{m,n},k) = P(K_{m,n},k)$.   
\end{customthm}

\begin{proof}
The statement is obvious when $k = 1$; so, henceforth we will assume that $k \geq 2$. Let $K$ be the complete bipartite graph with bipartition $X = \{x_{1}, \ldots, x_{m} \}$ and $Y = \{y_{1}, \ldots, y_{n}\}$. Let $\mathcal{H} = (L, H)$ be an arbitrary full $k$-fold cover of $K$ and let $\mathcal{H}^{*} = (L, H^{*})$ be a $k$-fold cover of $K$ with a canonical labeling so that for each $p \in [m]$ and $q \in [n]$, $(x_{p},l)(y_{q},l) \in E(H^{*})$ for each $l \in [k]$. It suffices to show that $P_{DP}(K,\mathcal{H}) \leq P_{DP}(K,\mathcal{H}^{*})$. Note that there are $k^{n}$ ways to pick an independent set of size $n$ from $H^{*}[\bigcup_{y \in Y} L(y)]$ and from $H[\bigcup_{y \in Y} L(y)]$. Moreover these independent sets are the same. We index these independent sets as $A_{1}, \ldots, A_{k^{n}}$. 

We start with counting the number of $\mathcal{H}^{*}$-colorings of $K$. For each $i \in [m]$ and $j \in [k^n]$, let $\alpha_{ij}$ be the number of vertices, $(x_{i},z) \in L(x_{i})$ such that $A_{j}\cup \{(x_{i},z)\}$ is an independent set in $H^{*}$. Since $X$ is an independent set, the number of $H^{*}$-colorings of $K$ that have $A_{j}$ as a subset are $\prod_{i=1}^{m} \alpha_{ij}$. Thus $P_{DP}(K,\mathcal{H}^{*}) = \sum_{j=1}^{k^{n}}\prod_{i=1}^{m} \alpha_{ij}$. Notice that for each $i \in [m]$ and $j \in [k^{n}]$, $\alpha_{ij} \in \{0\}\cup[k-1] $. Recall that for each $p \in [m]$ and $q \in [n]$, $(x_{p},l)(y_{q},l) \in E(H^{*})$ for each $l \in [k]$. Hence for each $j \in [k^n]$, $\alpha_{1j} = \ldots = \alpha_{mj}$. Suppose $\tau$ is a permutation of $[k^{n}]$ so that $\alpha_{i\tau(1)} \leq \ldots \leq \alpha_{i\tau(k^{n})}$ for each $i \in [m]$. Then we let $\beta_{i1} = \alpha_{i\tau(1)}, \ldots, \beta_{ik^{n}} = \alpha_{i\tau(k^{n})}$ for each $i \in [m]$. Clearly $\sum_{j=1}^{k^{n}}\prod_{i=1}^{m} \alpha_{ij} = \sum_{j=1}^{k^{n}}\prod_{i=1}^{m} \beta_{ij}$. 

Now we count the number of $\mathcal{H}$-colorings of $K$. For each $i \in [m]$ and $j \in [k^n]$, let $\gamma_{ij}$ be the number of vertices, $(x_{j},z) \in L(x_{j})$ so that $A_{i}\cup \{(x_{j},z)\}$ is an independent set in $H$. For each $i \in [m]$, let $c_{i,l}$ be the number of times $l$ appears in $\beta_{i1}, \ldots, \beta_{ik^{n}}$. Since $\mathcal{H}$ is full, Proposition~\ref{pro:canonical} tells us that for every $p,q \in [m]$, $c_{p,l} = c_{q,l}$. Hence for each $i \in [m]$, the multiset $\{\gamma_{ij} : j \in [k^{n}]\}$ equals the multiset $\{\beta_{ij} : j \in [k^{n}]\}$. Clearly $P_{DP}(K,\mathcal{H}) = \sum_{j=1}^{k^{n}}\prod_{i=1}^{m} \gamma_{ij} = \sum_{j=1}^{k^{n}}\prod_{i=1}^{m} \beta_{i\sigma_{i}(j)}$ where $\sigma_{i}$ is a permutation of $[k^{n}]$ for each $i \in [m]$. By applying Theorem~\ref{thm:rearrangement}, we have $P_{DP}(K, \mathcal{H}) = \sum_{j=1}^{k^{n}}\prod_{i=1}^{m} \beta_{i\sigma_{i}(j)} \leq \sum_{j=1}^{k^{n}}\prod_{i=1}^{m} \beta_{ij} = P_{DP}(K,\mathcal{H}^{*})$.
\end{proof}

Finally, as an application of Theorem~\ref{thm: Dong} we get,

\begin{customcor} {\bf \ref{cor: application}} 
     Let $m, n, k \in \N$. Then, $\frac{P^{*}_{DP}(K_{m,n},k+1)}{(k+1)^{m+n}} \geq \frac{P^{*}_{DP}(K_{m,n},k)}{k^{m+n}}$ for all $k \geq m+n-1$.
\end{customcor}


\begin{thebibliography}{99}
{\small
	
\bibitem{AM25} S. Allred and J. A. Mudrock, Enumerative Chromatic Choosability, arXiv: 2505.05662 (2025).
	
\bibitem{BW95} J. E. Bartels and D. Welsh, The markov chain of colourings, \emph{ In Proceedings of the 4th International IPCO Conference on Integer Programming and Combinatorial Optimization} (1995), 373-387. Springer-Verlag.

\bibitem{BK17} A. Bernshteyn, A. Kostochka, and X. Zhu, DP-colorings of graphs with high chromatic number, \emph{European Journal of Combinatorics} 65 (2017), 122-129.

\bibitem{B12} G. D. Birkhoff, A determinant formula for the number of ways of coloring a map, \emph{The Annals of Mathematics} 14 (1912), 42-46.

\bibitem{DKM23} S. L. Dahlberg, H. Kaul, J. Mudrock, A Polynomial Method for Counting Colorings of Sparse Graphs, arXiv: 2312.11744 (2023).

\bibitem{D00} F. Dong, Proof of a chromatic polynomial conjecture, \emph{Journal of Combinatorial Theory, Series B} 78 (2000), 35-44.

\bibitem{DY21} F. Dong and Y. Yang, DP color functions versus chromatic polynomials, \emph{Advances in Applied Mathematics}, 134 (2022), 102301.

\bibitem{DZ22} F. Dong and M. Zhang, An improved lower bound of $P(G,L)-P(G,k)$ for $k$-assignments $L$, \emph{Journal of Combinatorial Theory Series B} 161 (2023), 109-119.
       
\bibitem{D92} Q. Donner, On the number of list-colorings, \emph{Journal of Graph Theory} 16 (1992), 239-245.

\bibitem{DP15} Z. Dvo\v{r}\'{a}k and L. Postle, Correspondence coloring and its application to list-coloring planar graphs without cycles of lengths 4 to 8, \emph{Journal of Combinatorial Theory Series B} 129 (2018), 38-54.

\bibitem{ET79} P. Erd\H{o}s, A. L. Rubin, and H. Taylor, Choosability in graphs, \emph{Congressus Numerantium} 26 (1979), 125-127.

\bibitem{F15} S. Fadnavis, A note on the shameful conjecture, \emph{European Journal of Combinatorics} 47 (2015), 115-122.

\bibitem{KMUG23} H. Kaul, A. Kumar, A. Liu, J. A. Mudrock, P. Rewers, P. Shin, M. S. Tanahara and K. To, Bounding the List Color Function Threshold from Above, \emph{Involve, a Journal of Mathematics} 16(5) (2022), 849-882.

\bibitem{KM20} H. Kaul and J. A. Mudrock, On the chromatic polynomial and counting DP-colorings of graphs, \emph{Advances in Applied Mathematics} 123 (2021), article 103121.

\bibitem{KM21} H. Kaul and J. A. Mudrock, Criticality, the list color function, and list coloring the Cartesian product of graphs, \emph{Journal of Combinatorics} 12(3) (2021),  479-514.

\bibitem{KN16} R. Kirov and R. Naimi, List coloring and $n$-monophilic graphs, \emph{Ars Combinatoria} 124 (2016), 329-340.

\bibitem{AS90} A. V. Kostochka and A. Sidorenko, Problem Session of the Prachatice Conference on Graph Theory, \emph{Fourth Czechoslovak Symposium on Combinatorics, Graphs and Complexity}, Ann. Discrete Math. 51 (1992), 380.

\bibitem{M21} J. A. Mudrock, A deletion-contraction relation for the DP color function, \emph{Graphs and Combinatorics}, 38, 115 (2022). 

\bibitem{M24} J. A. Mudrock and G. Sharbel, On Polynomial Representations of Dual DP Color Functions, arXiv: 2407.04807 (2024). 

\bibitem{MT20} J. A. Mudrock and S. Thomason, Answers to two questions on the DP color function, \emph{Elect. Journal of Combinatorics} 28 (2021), P2.24.

\bibitem{R52} H. Ruderman, Two new inequalities, \emph{American Mathematical Monthly} 59 (1952), 29–32.

\bibitem{S97} P. Seymour, Two chromatic polynomial conjectures, \emph{Journal of Combinatorial Theory Series B} 70 (1997), 184–196.


\bibitem{T09} C. Thomassen, The chromatic polynomial and list colorings, \emph{Journal of Combinatorial Theory, Series B} 99 (2009), 474-479.

\bibitem{V76} V. G. Vizing, Coloring the vertices of a graph in prescribed colors, \emph{Diskret. Analiz.} no. 29, \emph{Metody Diskret. Anal. v Teorii Kodovi Skhem} 101 (1976), 3-10.

\bibitem{WQ17} W. Wang, J. Qian, and Z. Yan, When does the list-coloring function of a graph equal its chromatic polynomial, \emph{Journal of Combinatorial Theory Series B} 122 (2017), 543-549.


\bibitem{W01} D. B. West, \emph{Introduction to Graph Theory}, Upper Saddle River, NJ: Prentice Hall (2001).


\bibitem{ZD22} M. Zhang and F. Dong, DP color functions versus chromatic polynomials (II), \emph{Journal of Graph Theory}, 103 (2023), 740-761.
}

\end{thebibliography}
\end{document}